\newtheorem{thm}{Theorem}[section]
\newtheorem{lem}[thm]{Lemma}
\newtheorem{cor}[thm]{Corollary}
\theoremstyle{definition} 
\newtheorem{remark}[thm]{Remark}
\numberwithin{equation}{section}
\def\supp{{\mathop\mathrm{\,supp\,}}}
\def\laz{\langle}
\def\raz{\rangle}
\begin{document}

\title{A note on extrapolation of compactness}

\author{Shenyu Liu, Huoxiong Wu and Dongyong Yang}

\address{Shenyu Liu, School of Mathematical Sciences\\
 Xiamen University\\
 Xiamen 361005, China}

\email{shenyuliu@stu.xmu.edu.cn}

\address{Huoxiong Wu, School of Mathematical Sciences\\
 Xiamen University\\
 Xiamen 361005, China}

\email{huoxwu@xmu.edu.cn}

\address{Dongyong Yang (Corresponding author), School of Mathematical Sciences\\
 Xiamen University\\
 Xiamen 361005, China}

\email{dyyang@xmu.edu.cn}

\makeatletter
\@namedef{subjclassname@2020}{\textup{2020} Mathematics Subject Classification}
\makeatother
\subjclass[2020]{47B38, 46B70, 42B35, 42B20}

\date{\today}

\keywords{Weighted extrapolation, compact operator, two-weight, commutator, Calder\'{o}n--Zygmund operator, fractional integral, bilinear operator}


\begin{abstract}
This note is devoted to the study of Hyt\"{o}nen's extrapolation theorem of compactness on weighted Lebesgue spaces. Two criteria of compactness of linear operators in the two-weight setting are obtained. As applications, we obtain two-weight compactness of commutators of Calder\'{o}n--Zygmund operators, fractional integrals and bilinear Calder\'{o}n--Zygmund operators.
\end{abstract}

\maketitle


\section{Introduction}\label{s-1}
By a weight on $\mathbb{R}^d$ we mean a locally integrable function $w\in L^1_{\rm loc}(\mathbb{R}^d)$ that is positive almost everywhere. For $p\in (1,\infty)$, we define the weighted Lebesgue spaces $$L^p(w):=\bigg\{f:\mathbb{R}^d\to \mathbb{C}\ {\rm measurable}\ \bigg|\  \|f\|_{L^p(w)}:=\bigg(\int_{\mathbb{R}^d}|f(x)|^p w(x)dx\bigg)^{\frac{1}{p}}<\infty\bigg\}.$$
A weight $w$ is called a Muckenhoupt $A_p$ weight (write $w\in A_p$) if $$[w]_{A_p}:=\sup_{Q\subset \mathbb{R}^d}\laz w\raz_Q\laz w^{1-p'}\raz^{p-1}_Q< \infty,$$ where the supremum is taken over all cubes $Q\subset \mathbb{R}^d$, $\laz w\raz_Q:=|Q|^{-1}\int_Qw$ and $p':=\frac{p}{p-1}$. We define $A_{\infty}:=\cup_{p>1}A_p$ and $$[w]_{A_{\infty}}:=\sup_{Q\subset \mathbb{R}^d}\frac{1}{w(Q)}\int_QM(w\chi_Q)(x)dx<\infty,$$ where $M$ denotes the Hardy--Littlewood maximal operator. A weight $w$ is called an $A_{p,q}$ weight (write $w\in A_{p,q}$) if $$[w]_{A_{p,q}}:=\sup_{Q\subset \mathbb{R}^d}\laz w^q\raz^{\frac{1}{q}}_Q\laz w^{-p'}\raz^{\frac{1}{p'}}_Q< \infty,\ \ 1<p\leq q<\infty.$$ It readily follows that $$w\in A_{p,q}\iff w^q\in A_{1+q/p'}\subset A_q\iff w^{-p'}\in A_{1+p'/q}\subset A_{p'}\Rightarrow w^p\in A_p.$$

In this note, we show that the corresponding results of Hyt\"{o}nen \cite{H}, Hyt\"{o}nen and Lappas \cite{HL} are still valid in the two-weight setting. To be precise, we establish the following two extrapolation theorems of compactness:
\begin{thm}\label{t-main}
Let $T$ be a linear operator simultaneously defined and bounded from $L^{p}(\sigma)$ to $L^{p}(\lambda)$ for all $p\in (1,\infty)$ and all $\sigma, \lambda\in A_p$. Suppose moreover that $T$ is compact on $L^q(w)$ for some $q\in (1,\infty)$ and some $w\in A_q$. Then $T$ is compact from $L^{p}(\sigma)$ to $L^{p}(\lambda)$ for all $p\in (1,\infty)$ and all $\sigma, \lambda\in A_p$.
\end{thm}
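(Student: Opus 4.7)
The plan is to follow the complex-interpolation strategy of Hyt\"{o}nen \cite{H} and Hyt\"{o}nen--Lappas \cite{HL}, extending it to the genuinely two-weight setting. The idea is to realize $L^p(\sigma)$ and $L^p(\lambda)$ as $\theta$-interpolants between $L^q(w)$ and an auxiliary pair $L^{p_1}(\sigma_1), L^{p_1}(\lambda_1)$ with $\sigma_1,\lambda_1\in A_{p_1}$, and then to deduce compactness of $T\colon L^p(\sigma)\to L^p(\lambda)$ from the assumed compactness on $L^q(w)$ and the boundedness on the auxiliary pair via the Cwikel--Cobos--Fern\'{a}ndez-Mart\'{i}nez theorem on complex interpolation of compact operators.

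Concretely, we will fix $\theta\in(0,1)$, define $p_1$ by $\frac{1}{p}=\frac{1-\theta}{q}+\frac{\theta}{p_1}$, and set
$$\sigma_1:=\sigma^{p_1/(p\theta)}\,w^{-p_1(1-\theta)/(q\theta)},\qquad \lambda_1:=\lambda^{p_1/(p\theta)}\,w^{-p_1(1-\theta)/(q\theta)}.$$
The Stein--Weiss complex-interpolation formula then delivers $[L^q(w),L^{p_1}(\sigma_1)]_\theta = L^p(\sigma)$ and $[L^q(w),L^{p_1}(\lambda_1)]_\theta = L^p(\lambda)$. Once $\sigma_1,\lambda_1\in A_{p_1}$ is established, the hypothesis yields $T\colon L^{p_1}(\sigma_1)\to L^{p_1}(\lambda_1)$ boundedly, and interpolation of compactness closes the argument.

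The main obstacle, and essentially the only substantive technical point, is to ensure that $\sigma_1,\lambda_1\in A_{p_1}$ for a suitable $\theta$. As $\theta\uparrow 1$ we have $p_1\to p$, $p_1/(p\theta)\to 1$, and $p_1(1-\theta)/(q\theta)\to 0$, so that $\sigma_1\to \sigma$ and $\lambda_1\to \lambda$ in a log-additive sense. Since $\sigma,\lambda\in A_p$ and $w\in A_q$, we plan to exploit the openness (reverse H\"older) property of the Muckenhoupt classes together with a H\"older-inequality estimate applied to the defining supremum of $[\sigma_1]_{A_{p_1}}$ to obtain a quantitative bound in terms of $[\sigma]_{A_p}$, $[w]_{A_q}$ and $\theta$, and analogously for $[\lambda_1]_{A_{p_1}}$. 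Choosing $\theta$ close enough to $1$ then places $\sigma_1,\lambda_1$ simultaneously in $A_{p_1}$. This is the only point where the two-weight analysis departs from the one-weight case of \cite{H,HL}; because $\sigma$ and $\lambda$ enter the construction symmetrically and independently, the same perturbation argument applies to each with no new conceptual difficulty, only a careful simultaneous choice of $\theta$.
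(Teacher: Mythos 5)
Your proposal is correct and is essentially the paper's own argument: after the relabeling $\theta\mapsto 1-\theta$, your exponent $p_1$ and auxiliary weights $\sigma_1,\lambda_1$ coincide exactly with the $t$ and $\sigma_0,\lambda_0\in A_t$ produced by Lemma \ref{l-tw}, whose proof is precisely the reverse-H\"{o}lder/openness computation you sketch, carried out for $\sigma$ and $\lambda$ separately with one common small choice of the perturbation parameter. The concluding step is likewise the same, namely the Cwikel--Kalton endpoint compactness interpolation theorem (Theorem \ref{t-ck} --- note it is this result, rather than the bilinear Cobos--Fern\'{a}ndez-Cabrera--Mart\'{\i}nez theorem you name, that applies in the linear case), whose side conditions all hold for weighted Lebesgue spaces.
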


Theorem \ref{t-main} says that one-weight compactness (or even unweighted compactness when we take $w\equiv 1$) bootstraps to two-weight compactness, if the two-weight boundedness is already known. We also consider the counterpart off-diagonal case in the two-weight setting as follows:
\begin{thm}\label{t-main2}
Let $T$ be a linear operator simultaneously defined and compact from $L^{p_1}(w_1^{p_1})$ to $L^{q_1}(w_1^{q_1})$ for some $1<p_1\leq q_1<\infty$ and some $w_1\in A_{p_1,q_1}$. Suppose moreover that $T$ is simultaneously defined and bounded from $L^{p}(\sigma^p)$ to $L^{q}(\lambda^q)$ for all $p,q\in (1,\infty)$ with $\frac{1}{p}-\frac{1}{q}=\frac{1}{p_1}-\frac{1}{q_1}$ and all $\sigma,\lambda \in A_{p,q}$. Then $T$ is compact from $L^{p}(\sigma^p)$ to $L^{q}(\lambda^q)$ for all $p,q\in (1,\infty)$ with $\frac{1}{p}-\frac{1}{q}=\frac{1}{p_1}-\frac{1}{q_1}$ and all $\sigma,\lambda \in A_{p,q}$.
\end{thm}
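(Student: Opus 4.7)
The plan is to mimic the diagonal strategy behind Theorem~\ref{t-main} in the off-diagonal setting, first passing to one-weight compactness via Hyt\"onen--Lappas~\cite{HL} and then splitting the two target weights by complex interpolation. I would specialize the two-weight boundedness hypothesis to $\sigma=\lambda=w$ to get $T\colon L^{p}(w^{p})\to L^{q}(w^{q})$ bounded for every admissible pair (i.e.\ $\frac{1}{p}-\frac{1}{q}=\frac{1}{p_{1}}-\frac{1}{q_{1}}$) and every $w\in A_{p,q}$; combined with the single-pair compactness at $(p_{1},q_{1},w_{1})$, the off-diagonal one-weight extrapolation of compactness of~\cite{HL} then promotes this to compactness of $T\colon L^{p}(w^{p})\to L^{q}(w^{q})$ for every admissible $(p,q)$ and every $w\in A_{p,q}$. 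Taking $w\equiv 1\in A_{p,q}$ gives, in particular, the unweighted compactness of $T\colon L^{p}(\mathbb{R}^{d})\to L^{q}(\mathbb{R}^{d})$.

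For a target $(p,q,\sigma,\lambda)$ I would next apply the reverse H\"older inequality to each of $\sigma^{q}\in A_{1+q/p'}$, $\sigma^{-p'}\in A_{1+p'/q}$, $\lambda^{q}\in A_{1+q/p'}$, $\lambda^{-p'}\in A_{1+p'/q}$ (with H\"older interpolation between exponents $1$ and $1+\delta$ to keep intermediate powers inside the same $A_{r}$ class) in order to produce a common $s>1$ with $\sigma^{s},\lambda^{s}\in A_{p,q}$. The two-weight boundedness hypothesis then yields $T\colon L^{p}(\sigma^{sp})\to L^{q}(\lambda^{sq})$ bounded, while Stein--Weiss gives
\[
[L^{p}(\mathbb{R}^{d}),L^{p}(\sigma^{sp})]_{1/s}=L^{p}(\sigma^{p}),\qquad [L^{q}(\mathbb{R}^{d}),L^{q}(\lambda^{sq})]_{1/s}=L^{q}(\lambda^{q}).
\]
Cwikel's theorem on complex interpolation of compact operators, applied with the compact endpoint $T\colon L^{p}(\mathbb{R}^{d})\to L^{q}(\mathbb{R}^{d})$ and the bounded endpoint $T\colon L^{p}(\sigma^{sp})\to L^{q}(\lambda^{sq})$ at parameter $\theta=1/s$, then delivers the desired compactness of $T\colon L^{p}(\sigma^{p})\to L^{q}(\lambda^{q})$.

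The hardest part, I expect, will be the final compactness interpolation: one has to invoke a version of Cwikel's theorem that applies cleanly to Banach couples of weighted Lebesgue spaces, which in our setting is handled by the standard reflexivity of $L^{r}(w)$ for $1<r<\infty$. Producing a single exponent $s>1$ that improves all four Muckenhoupt weights $\sigma^{q},\sigma^{-p'},\lambda^{q},\lambda^{-p'}$ simultaneously is essentially routine self-improvement for $A_{r}$ classes, and the off-diagonal one-weight extrapolation of compactness is inherited verbatim from~\cite{HL}; the novelty of the argument lies entirely in the two-weight repackaging via Stein--Weiss and Cwikel.
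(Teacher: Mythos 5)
Your argument is correct, but it takes a genuinely different route from the one in the paper. The paper never passes through the unweighted case: it proves an off-diagonal two-weight factorization lemma (Lemma \ref{l-tw2}, the analogue of Lemma \ref{l-tw} and of \cite[Lemma 3.4]{HL}) producing exponents $p_0,q_0$ and weights $\sigma_0,\lambda_0\in A_{p_0,q_0}$ so that $L^p(\sigma^p)$ and $L^q(\lambda^q)$ are complex interpolation spaces between the bounded pair $\bigl(L^{p_0}(\sigma_0^{p_0}),L^{q_0}(\lambda_0^{q_0})\bigr)$ and the \emph{given} compact pair $\bigl(L^{p_1}(w_1^{p_1}),L^{q_1}(w_1^{q_1})\bigr)$, and then applies Theorem \ref{t-ck} once. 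You instead first black-box the one-weight off-diagonal extrapolation of compactness from \cite{HL} to reach the unweighted compact endpoint $T\colon L^p\to L^q$, and then split the two target weights by the reverse H\"older power trick at the \emph{same} exponent pair $(p,q)$ -- which is exactly the mechanism the paper uses later in its applications (Corollaries \ref{c-ll} and \ref{c-new}). Your version trades the new Lemma \ref{l-tw2} for a dependence on the full extrapolation theorem of \cite{HL}; the interpolation arithmetic it requires (Theorem \ref{t-sw} with equal exponents and powers of a single weight, plus Theorem \ref{t-p} to get $\sigma^s,\lambda^s\in A_{p,q}$) is genuinely more elementary. Two small points to tighten. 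First, the final step is not Cwikel's Theorem \ref{t-c} (which needs compactness at an \emph{interior} $\theta_*$) but the Cwikel--Kalton Theorem \ref{t-ck}, which upgrades compactness at an \emph{endpoint}; the side condition that makes it applicable here is not bare reflexivity but condition $(d)$ (weighted $L^p$ spaces are complexified Banach lattices on $\mathbb{R}^d$), as recorded in \cite[Corollary 3.3]{H}. Second, the theorem in \cite{HL} is nominally stated with the operator norm dominated by an increasing function of $[w]_{A_{p_1,q_1}}$, a quantitative hypothesis not present in Theorem \ref{t-main2}; this is harmless because that hypothesis is only used in \cite{HL} to extrapolate boundedness, which you already assume at every admissible $(p,q,w)$, so only the compactness-propagation half of their argument is needed -- but it is worth saying explicitly.
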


The rest of this note is organized as follows. In Section \ref{s-2}, we recover Hyt\"{o}nen's extrapolation theorem by extending the extrapolation theorem of the $A_p$ weights in \cite{H}. In Section \ref{s-3}, we obtain some generalizations of extrapolation of compactness in the two-weight setting. Section \ref{s-4} is devoted to providing two applications to the two-weight compactness of linear operators, including commutators of Calder\'{o}n--Zygmund operators and fractional integrals. We also give an application to the two-weight compactness of commutators of bilinear Calder\'{o}n--Zygmund operators in Section \ref{s-5}.

Throughout this note, we write $A\lesssim B$ to denote $A \leq cB$ for some positive constant $c$ which is independent of the main parameters. We write $A\approx B$ if $A\lesssim B$ and $B\lesssim A$.

\section{Hyt\"{o}nen's extrapolation theorem: a revisit}\label{s-2}

Hyt\"{o}nen \cite{H} first established a compact version of Rubio de Francia's weighted extrapolation theorem (see Theorem \ref{t-h} below), whose approach relies on interpolation of compactness, see also \cite{HL,HLb,COY}. In this section, we recover Hyt\"{o}nen's extrapolation theorem by applying an abstract tool about extrapolation of compact operators due to Cwikel \cite[Theorem 2.1]{C}, in which there is no additional assumption on Banach couples in comparison to \cite[Theorem 3.1]{H} (see also Theorem \ref{t-ck} below).

\begin{thm}[\cite{C}]\label{t-c}
Let $(X_{0}, X_{1})$ and $(Y_{0}, Y_{1})$ be Banach couples and $T$ be a linear operator such that $T: X_{0}+X_{1}\to Y_{0}+Y_{1}$ and $T: X_{j} \to Y_{j}$ boundedly for $j=0,1$. Let $[\ ,\ ]_{\theta}$ be the complex interpolation functor of Calder\'{o}n. Suppose moreover that $T:[X_{0}, X_{1}]_{\theta} \to [Y_{0}, Y_{1}]_{\theta}$ is compact for at least one value $\theta_*$ of $\theta$ in $(0,1)$. Then $T:[X_{0}, X_{1}]_{\theta} \to [Y_{0}, Y_{1}]_{\theta}$ is compact for all values of $\theta$ in $(0,1)$.
\end{thm}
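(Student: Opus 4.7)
The plan is to work through Calder\'on's space $\mathcal{F}(X_0, X_1)$, whose elements are bounded continuous functions $f: \overline{S} \to X_0 + X_1$ on the closed strip $\overline{S}$ (with $S = \{0 < \mathrm{Re}\,z < 1\}$), holomorphic on $S$, taking boundary values $f(j+it) \in X_j$, and normed by $\|f\|_\mathcal{F} = \max_j \sup_t \|f(j+it)\|_{X_j}$; recall that $[X_0, X_1]_\theta = \{f(\theta): f \in \mathcal{F}\}$ with the infimum of $\|f\|_\mathcal{F}$ over such representations as norm. Given a bounded sequence $(x_n)$ in $[X_0, X_1]_\theta$, I would pick lifts $f_n \in \mathcal{F}$ with $f_n(\theta) = x_n$ and $\|f_n\|_\mathcal{F} \le 2\|x_n\|_{[X_0,X_1]_\theta}$, and set $g_n := T \circ f_n \in \mathcal{F}(Y_0, Y_1)$, a bounded sequence with $g_n(\theta) = T x_n$. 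The compactness hypothesis at $\theta_*$, applied to the bounded sequence $(f_n(\theta_*))$ in $[X_0, X_1]_{\theta_*}$, produces a subsequence along which $g_n(\theta_*) = T f_n(\theta_*)$ converges in $[Y_0, Y_1]_{\theta_*}$.

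The next step is a discretization. Using the Poisson-type representation for the strip, write $g_n(\theta)$ as a weighted integral of boundary values $g_n(j+it)$, and approximate this integral by a finite Riemann sum at sample points $\zeta_1, \dots, \zeta_N \in \partial S$ with discretization error at most $\varepsilon$ in the $[Y_0, Y_1]_\theta$ norm. Together with the interior point $\theta_*$, this yields $N+1$ distinguished evaluation points at which one attempts to extract a subsequence along which $g_n$ is Cauchy, and then to transfer this (via an appropriate three-lines-type estimate) to Cauchyness at $\theta$ itself.

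The main obstacle is precisely this transfer: the boundary sequences $(g_n(\zeta_k))_n$ are only bounded in $Y_0$ or $Y_1$ and not a priori precompact, so Cantor diagonalization alone fails. Cwikel's technical contribution, which distinguishes his argument from Calder\'on's earlier reflexivity-based proof, is a construction of finite-rank operators $T_\varepsilon$ bounded on each endpoint pair $X_j \to Y_j$ and satisfying $\|T - T_\varepsilon\|_{[X_0,X_1]_{\theta_*} \to [Y_0,Y_1]_{\theta_*}} < \varepsilon$; this construction proceeds by lifting a finite $\varepsilon$-net for the image of the unit ball at $\theta_*$ back to analytic functions on the strip, extending them in a norm-controlled way to the endpoints. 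One then applies the reiteration-based interpolation inequality
$$\|T - T_\varepsilon\|_{X_\theta \to Y_\theta} \le C \|T - T_\varepsilon\|_{X_0 \to Y_0}^{1-\theta/\theta_*} \|T - T_\varepsilon\|_{X_{\theta_*} \to Y_{\theta_*}}^{\theta/\theta_*}$$
for $\theta \in (0, \theta_*)$, and the symmetric inequality for $\theta \in (\theta_*, 1)$, to conclude that $T$ is the operator-norm limit of the compact $T_\varepsilon$ on $[X_0, X_1]_\theta \to [Y_0, Y_1]_\theta$, hence compact. Carrying out the lifting step in full generality, without any reflexivity or density assumption on the endpoint couples, is the delicate heart of the argument.
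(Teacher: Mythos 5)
First, a point of order: the paper does not prove Theorem \ref{t-c} at all --- it is quoted from Cwikel \cite{C} and used as a black box --- so the only meaningful comparison is with Cwikel's published argument. Judged on its own terms, your proposal has a genuine gap at its center. The first two paragraphs (lifting a bounded sequence to Calder\'on's space $\mathcal{F}(X_0,X_1)$, discretizing the Poisson representation of $g_n(\theta)$) are set up and then, as you yourself note, abandoned because the boundary sequences $(g_n(\zeta_k))_n$ are merely bounded; they contribute nothing to what follows. The argument you actually propose is: (i) for each $\varepsilon>0$ construct a finite-rank operator $T_\varepsilon$ that is bounded $X_j\to Y_j$ for $j=0,1$ and satisfies $\|T-T_\varepsilon\|_{[X_0,X_1]_{\theta_*}\to[Y_0,Y_1]_{\theta_*}}<\varepsilon$; (ii) interpolate the norm of $T-T_\varepsilon$. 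Step (i) carries the entire content of the theorem and is never carried out. An $\varepsilon$-net for $T(B_{[X_0,X_1]_{\theta_*}})$ is a finite set of vectors in $[Y_0,Y_1]_{\theta_*}$; to manufacture a finite-rank \emph{operator} from it one must choose functionals, and a bounded functional on $[X_0,X_1]_{\theta_*}$ need not act boundedly on $X_0$ or $X_1$, nor extend to $X_0+X_1$, so ``lifting the net back to analytic functions and extending in a norm-controlled way to the endpoints'' does not describe a construction. The assertion that every couple map which is compact at one interior point admits finite-rank couple-map approximants in that interior operator norm is an approximation-property-type statement at least as strong as the theorem itself, and it is not what Cwikel does: his proof of this result is routed through his real-interpolation machinery (the one-sided compactness theorem for the $K$-method, reiteration, and the inclusions $(X_0,X_1)_{\theta,1}\subset[X_0,X_1]_{\theta}\subset(X_0,X_1)_{\theta,\infty}$), not through finite-rank approximation at $\theta_*$.

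Even granting (i), step (ii) is incomplete on two counts. The log-convexity inequality you invoke, with the interior point $\theta_*$ playing the role of an endpoint, is the operator-norm form of the reiteration theorem $[[X_0,X_1]_{\theta_0},[X_0,X_1]_{\theta_1}]_{\eta}=[X_0,X_1]_{(1-\eta)\theta_0+\eta\theta_1}$, which for arbitrary Banach couples without density assumptions is itself a nontrivial theorem (also due to Cwikel); you use it silently. More seriously, the conclusion $\|T-T_\varepsilon\|_{[X_0,X_1]_{\theta}\to[Y_0,Y_1]_{\theta}}\to 0$ requires the endpoint norms $\|T_\varepsilon\|_{X_0\to Y_0}$ to remain bounded as $\varepsilon\to 0$ (or at least to grow slower than $\varepsilon^{-\theta/(\theta_*-\theta)}$), and nothing in the sketch controls them. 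That last defect is repairable --- obtain compactness on a small interval around $\theta_*$ and propagate by connectedness --- but it must be said. As written, the proposal is a plausible skeleton that reduces the theorem to an unproved, and arguably harder, claim.
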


To apply Theorem \ref{t-c}, we need a well-known result of Stein and Weiss \cite{SW} as follows, see also \cite[Theorem 5.5.3]{BL}.

\begin{thm}[\cite{SW,BL}]\label{t-sw}
Let $q,t\in [1,\infty)$ and $w_0,w_1$ be two weights. Then for all $\theta \in(0,1)$, $$L^p(w)=[L^t(w_0),L^q(w_1)]_{\theta},$$ where
\begin{equation}
\frac{1}{p}=\frac{1-\theta}{t}+\frac{\theta}{q},\ \ w^{\frac{1}{p}}=w^{\frac{1-\theta}{t}}_0w^{\frac{\theta}{q}}_1.\label{eq-pw}
\end{equation}
\end{thm}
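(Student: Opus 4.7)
The plan is to reduce the weighted complex interpolation identity to the classical unweighted Calder\'on identity $[L^t,L^q]_\theta = L^p$ via an analytic family of pointwise multipliers that intertwine the weighted and unweighted spaces. The crucial device is
\[
\omega(z):=w_0^{(1-z)/t}w_1^{z/q},\qquad z\in\overline{S}=\{z\in\cc:0\le\mathrm{Re}(z)\le 1\},
\]
which, thanks to the weight relation \eqref{eq-pw}, acts as isometric multiplication $L^t(w_0)\to L^t$ on $\mathrm{Re}(z)=0$, as isometric multiplication $L^q(w_1)\to L^q$ on $\mathrm{Re}(z)=1$, and as isometric multiplication $L^p(w)\to L^p$ at $z=\theta$.

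For the inclusion $[L^t(w_0),L^q(w_1)]_\theta \hookrightarrow L^p(w)$, I would take $f$ in the interpolation space, pick a Calder\'on admissible function $F:\overline{S}\to L^t(w_0)+L^q(w_1)$ with $F(\theta)=f$, and form $G(z):=\omega(z)F(z)$. A pointwise modulus computation on each boundary line yields $\|G(it)\|_{L^t}=\|F(it)\|_{L^t(w_0)}$ and $\|G(1+it)\|_{L^q}=\|F(1+it)\|_{L^q(w_1)}$, so $G$ is admissible for the unweighted couple $(L^t,L^q)$. The classical identity $[L^t,L^q]_\theta=L^p$ then gives $G(\theta)=w^{1/p}f\in L^p$, hence $f\in L^p(w)$ with $\|f\|_{L^p(w)}\le\|F\|_\theta$. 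Taking the infimum over admissible $F$ yields this inclusion with the matching norm estimate.

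For the reverse inclusion $L^p(w)\hookrightarrow[L^t(w_0),L^q(w_1)]_\theta$, I would first handle a dense class of simple functions of bounded support. For such an $f$ with $\|f\|_{L^p(w)}=1$, define
\[
F(z):=e^{\delta(z^2-\theta^2)}\,|f|^{p/p(z)}\,\mathrm{sgn}(f)\,w^{1/p}\,\omega(z)^{-1},
\]
where $1/p(z):=(1-z)/t+z/q$ and the Gaussian factor with $\delta>0$ is inserted to enforce boundary decay without disturbing the value at $\theta$. Direct checks show $F(\theta)=f$, while the exponents of $|f|$, $w$, $w_0$, $w_1$ arrange themselves so that $\|F(it)\|_{L^t(w_0)}\le 1$ and $\|F(1+it)\|_{L^q(w_1)}\le 1$; hence $\|f\|_{[L^t(w_0),L^q(w_1)]_\theta}\le 1$. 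Sending $\delta\to 0^+$ and extending by density of simple functions in $L^p(w)$ concludes the inclusion.

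The main obstacle will be securing genuine analyticity and boundary continuity for these admissible functions: $\omega(z)$ is not a bounded linear operator on the whole sum space $L^t(w_0)+L^q(w_1)$, so an abstract compatibility lemma for holomorphic families of isometries does not apply as a black box, and admissibility in the Calder\'on class must be verified directly. Restricting to simple $f$ of bounded support and inserting the Gaussian regularizer provides the required integrability and growth controls, and approximation handles the general case; the delicacy lies precisely in this analytic bookkeeping, given that $w_0,w_1$ are merely measurable.
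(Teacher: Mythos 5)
The paper offers no proof of Theorem \ref{t-sw}; it is quoted verbatim from Stein--Weiss \cite{SW} and \cite[Theorem 5.5.3]{BL}, and your multiplier/test-function scheme is indeed the classical route. Within that scheme, however, your extremal function for the inclusion $L^p(w)\hookrightarrow[L^t(w_0),L^q(w_1)]_\theta$ is wrong as written: with $F(z)=e^{\delta(z^2-\theta^2)}|f|^{p/p(z)}\,\mathrm{sgn}(f)\,w^{1/p}\,\omega(z)^{-1}$ one computes on the left boundary $|F(iy)|=e^{-\delta(y^2+\theta^2)}|f|^{p/t}\,w^{1/p}\,w_0^{-1/t}$, hence $\|F(iy)\|_{L^t(w_0)}^t=e^{-t\delta(y^2+\theta^2)}\int|f|^p\,w^{t/p}$, which is not controlled by $\|f\|_{L^p(w)}=1$ unless $t=p$ (and $w^{t/p}$ need not even be locally integrable, so restricting to simple $f$ of bounded support does not save it). The fixed factor $w^{1/p}$ must be replaced by the analytic family $w^{1/p(z)}$. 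Then $F(\theta)=f$ still holds because $w^{1/p}\omega(\theta)^{-1}=1$, the boundary identities become $|F(iy)|^t w_0=e^{-t\delta(y^2+\theta^2)}|f|^p w$ and $|F(1+iy)|^q w_1=e^{q\delta(1-y^2-\theta^2)}|f|^p w$ as desired, and the pointwise bound $|F(z)|\le C_\delta\,(|f|^{p/t}(w/w_0)^{1/t})^{1-\mathrm{Re}\,z}(|f|^{p/q}(w/w_1)^{1/q})^{\mathrm{Re}\,z}\le C_\delta(A+B)$ with $A\in L^t(w_0)$, $B\in L^q(w_1)$ gives membership in the sum space, analyticity (via Morera and dominated convergence), and admissibility for \emph{every} $f\in L^p(w)$, so neither the restriction to simple functions nor a density step is actually needed for this half.

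The other half has a genuine gap that you flag but do not close, and it is not mere bookkeeping. For a general admissible $F$ and an interior point $z$ with $\mathrm{Re}\,z=s\in(0,1)$ one has $|\omega(z)|=w_0^{(1-s)/t}w_1^{s/q}$, and there is no reason why $G(z)=\omega(z)F(z)$ should belong to $L^t+L^q$ at all, let alone define a bounded analytic $(L^t,L^q)$-admissible function; so the classical identity $[L^t,L^q]_\theta=L^p$ cannot be applied to $G$. Two standard repairs exist. One is to use the density in Calder\'on's space $\mathcal{F}$ of exponential polynomials $\sum_j e^{\lambda_j z^2+\mu_j z}a_j$ with $a_j\in L^t(w_0)\cap L^q(w_1)$: for such $a_j$ one has $|\omega(z)a_j|=(w_0^{1/t}|a_j|)^{1-s}(w_1^{1/q}|a_j|)^{s}\le (1-s)w_0^{1/t}|a_j|+s\,w_1^{1/q}|a_j|\in L^t+L^q$, and admissibility of $\omega F$ can then be verified directly. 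The other, closer to \cite[Theorem 5.5.3]{BL}, is to estimate $\|F(\theta)\|_{L^p(w)}$ by duality, pairing $F(z)$ against an admissible function for the dual couple and applying the three-lines theorem to the resulting scalar-valued analytic function, which avoids letting $\omega(z)$ act on the sum space altogether. Without one of these devices the forward inclusion is not established.
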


We also need the following powerful result of Hyt\"{o}nen \cite[Lemma 4.3]{H} that can be understood as an extrapolation of the $A_p$ weights.

\begin{thm}[\cite{H}]\label{t-h1}
Let $p,q\in (1,\infty)$, $w\in A_p$ and $w_1\in A_q$. Then there exist $\theta\in (0,1)$, $t\in (1,\infty)$ and $w_0\in A_t$ such that \eqref{eq-pw} holds.
\end{thm}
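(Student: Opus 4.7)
The plan is to reduce \eqref{eq-pw} to a one-parameter family. Solving the linear relation for $t$ in terms of $\theta$ yields
\[
t=t(\theta):=\frac{pq(1-\theta)}{q-p\theta},\qquad \theta\in\bigl[0,\min\{1,q/p\}\bigr),
\]
so that $t(0)=p$, and the multiplicative relation then forces
\[
w_{0}(\theta):=w^{q/(q-p\theta)}\,w_{1}^{-p\theta/(q-p\theta)}=w\cdot (w/w_{1})^{p\theta/(q-p\theta)},
\]
with $w_{0}(0)=w$. The problem therefore reduces to showing that $w_{0}(\theta)\in A_{t(\theta)}$ for all sufficiently small $\theta>0$, i.e., to verifying that the curve $(t(\theta),w_{0}(\theta))$ remains in the Muckenhoupt set in a right-neighborhood of $0$.

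To verify the $A_{t(\theta)}$ condition on $w_{0}(\theta)$ I would invoke the quantitative self-improvement of $A_{p}$ (the reverse Hölder inequality): since $w\in A_{p}$ and $w_{1}\in A_{q}$, there exist $\delta_{0},\delta_{1}>0$ such that both $w^{1+\delta_{0}}$ and $w_{1}^{1+\delta_{1}}$ (and analogously their duals $w^{1-p'}$, $w_{1}^{1-q'}$) are $A_{\infty}$ weights with quantitative average bounds. Setting $\alpha(\theta):=q/(q-p\theta)$ and $\beta(\theta):=p\theta/(q-p\theta)$, I would then expand
\[
[w_{0}(\theta)]_{A_{t(\theta)}}=\sup_{Q}\langle w^{\alpha}w_{1}^{-\beta}\rangle_{Q}\,\langle w^{-\alpha/(t(\theta)-1)}w_{1}^{\beta/(t(\theta)-1)}\rangle_{Q}^{t(\theta)-1},
\]
and split each of the two averages via Hölder's inequality with exponents close to $(1,\infty)$. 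This separates the $w$-contribution from the $w_{1}^{-1}$-contribution; each separated factor is controlled via reverse Hölder for $w$ (respectively $w_{1}$) together with the smallness of $\beta(\theta)$, yielding a uniform bound for $\theta$ small enough, depending on $p$, $q$, $[w]_{A_{p}}$ and $[w_{1}]_{A_{q}}$.

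The principal obstacle is the quantitative bookkeeping in this last step: the class $A_{p}$ is not closed under arbitrary products of powers of weights, so one cannot naively argue that \emph{$w_{0}(\theta)$ is close to $w$, hence lies in $A_{p}$, hence in $A_{t(\theta)}$}. One must instead track how the Hölder exponents in the splitting, the reverse Hölder exponents of $w$ and $w_{1}$, and the perturbation $\theta$ mesh together, verifying that the resulting estimate remains finite as $\theta\to 0^{+}$. Once this is done, choosing any admissible small $\theta>0$ produces a triple $(\theta,t(\theta),w_{0}(\theta))$ in $(0,1)\times(1,\infty)\times A_{t(\theta)}$ satisfying \eqref{eq-pw}, which is exactly the desired conclusion.
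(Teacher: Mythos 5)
Your reduction to the one-parameter family $\bigl(t(\theta),w_0(\theta)\bigr)$ with $t(0)=p$ and $w_0(0)=w$, followed by a H\"older splitting of the two averages in the $A_{t(\theta)}$ characteristic with exponents tending to $(1,\infty)$ and a reverse H\"older estimate for $w$, $w_1$ and their dual weights, is precisely the strategy of Hyt\"onen's proof of this lemma, which this paper reproduces for its two-weight variant in the proof of Lemma \ref{l-tw}. The bookkeeping you defer is carried out there with the explicit choices $\epsilon=\theta p/q'$, $\delta=\theta p'/q$, and it closes exactly as you predict, giving $[w_0(\theta)]_{A_{t(\theta)}}\lesssim [w]_{A_p}^{q/(q-\theta p)}[w_1]_{A_q}^{\theta p/(q-\theta p)}$ for all sufficiently small $\theta>0$.
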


To adapt Theorem \ref{t-sw} to our case, we need a further result described below:

\begin{lem}\label{l-w}
Let $p,q\in (1,\infty)$, $u\in A_p$ and $v\in A_q$. Then there exist $\theta_0,\theta_1\in (0,1)$, $r,s\in (1,\infty)$, $w_0\in A_r$ and $w_1\in A_s$ such that
\begin{align}
\frac{1}{p}&=\frac{1-\theta_0}{r}+\frac{\theta_0}{s},\ \ u^{\frac{1}{p}}=w_0^{\frac{1-\theta_0}{r}}w_1^{\frac{\theta_0}{s}},\label{eq-w1}\\
\frac{1}{q}&=\frac{1-\theta_1}{r}+\frac{\theta_1}{s},\ \ v^{\frac{1}{q}}=w_0^{\frac{1-\theta_1}{r}}w_1^{\frac{\theta_1}{s}}.\label{eq-w2}
\end{align}
\end{lem}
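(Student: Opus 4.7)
The plan is to derive Lemma \ref{l-w} by chaining two applications of Theorem \ref{t-h1}. First I would apply Theorem \ref{t-h1} with the given data $u \in A_p$ and with $v \in A_q$ playing the role of the auxiliary weight, producing $\tilde\theta \in (0,1)$, $\tilde t \in (1,\infty)$ and $\tilde w \in A_{\tilde t}$ such that
$$\frac{1}{p} = \frac{1-\tilde\theta}{\tilde t} + \frac{\tilde\theta}{q}, \qquad u^{1/p} = \tilde w^{(1-\tilde\theta)/\tilde t}\, v^{\tilde\theta/q}.$$
Then I would apply Theorem \ref{t-h1} a second time, now to $v \in A_q$ with the freshly obtained $\tilde w \in A_{\tilde t}$ serving as the auxiliary weight, producing $\hat\theta \in (0,1)$, $\hat t \in (1,\infty)$ and $\hat w \in A_{\hat t}$ with
$$\frac{1}{q} = \frac{1-\hat\theta}{\hat t} + \frac{\hat\theta}{\tilde t}, \qquad v^{1/q} = \hat w^{(1-\hat\theta)/\hat t}\, \tilde w^{\hat\theta/\tilde t}.$$

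The key observation is that $v^{\tilde\theta/q}$ can be rewritten via the second formula, yielding the monomial representation
$$u^{1/p} = \hat w^{\tilde\theta(1-\hat\theta)/\hat t}\, \tilde w^{[1-\tilde\theta(1-\hat\theta)]/\tilde t}.$$
Hence both $u^{1/p}$ and $v^{1/q}$ become monomials in the single pair $(\hat w,\tilde w)$. I would then set $r := \hat t$, $s := \tilde t$, $w_0 := \hat w$, $w_1 := \tilde w$, $\theta_0 := 1 - \tilde\theta(1-\hat\theta)$, and $\theta_1 := \hat\theta$, and verify \eqref{eq-w1} and \eqref{eq-w2} directly. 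Membership $w_0 \in A_r$ and $w_1 \in A_s$ is immediate from the two applications; the conditions $\theta_0,\theta_1 \in (0,1)$ follow from $\tilde\theta,\hat\theta \in (0,1)$; the weight identities are exactly the display above together with the formula for $v^{1/q}$; and the two arithmetic sum constraints reduce to algebra combining the two sum identities, after substituting $\tilde\theta/q = \tilde\theta(1-\hat\theta)/\hat t + \tilde\theta\hat\theta/\tilde t$.

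There is no essential obstacle here beyond careful bookkeeping; the critical move is simply to feed the output $\tilde w$ of the first application back into Theorem \ref{t-h1} as the auxiliary weight in the second application. This is permissible because Theorem \ref{t-h1} only requires the auxiliary weight to belong to some $A_t$, and $\tilde w \in A_{\tilde t}$ by construction. If one wanted a slicker presentation, one could also observe that the substitution step is dictated by the geometric picture: the points $\log u$ and $\log v$ both lie on the segment in weight-logarithm space joining $\log w_0$ and $\log w_1$, and the double application of Theorem \ref{t-h1} is exactly what extends this segment through the two given points.
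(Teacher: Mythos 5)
Your proposal is correct and is essentially the paper's own argument with the roles of $(u,p)$ and $(v,q)$ interchanged: the paper first applies Theorem \ref{t-h1} to the pair $(v,u)$ to produce $w_0$ and then to $(u,w_0)$ to produce $w_1$, whereas you apply it first to $(u,v)$ and then to $(v,\tilde w)$, in both cases feeding the output of the first application back in as the auxiliary weight and concluding by substitution. The bookkeeping in your version checks out, with $\theta_0=1-\tilde\theta(1-\hat\theta)\in(0,1)$ playing the role of the paper's $\theta_1=\theta_0\tilde\theta_0$.
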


\begin{proof}
For $v\in A_q$ and $u\in A_p$, from Theorem \ref{t-h1}, there exist $\tilde{\theta}_0\in (0,1)$, $r\in (1,\infty)$ and $w_0\in A_r$ such that
\begin{equation}
\frac{1}{q}=\frac{1-\tilde{\theta}_0}{r}+\frac{\tilde{\theta}_0}{p},\ \ v^{\frac{1}{q}}=w_0^{\frac{1-\tilde{\theta}_0}{r}}u^{\frac{\tilde{\theta}_0}{p}}.\label{eq-w3}
\end{equation}
Then for $u\in A_p$ and  $w_0\in A_r$, again by Theorem \ref{t-h1}, there exist $\theta_0\in (0,1)$, $s\in (1,\infty)$ and $w_1\in A_s$ such that \eqref{eq-w1} holds. Combined with \eqref{eq-w1} and \eqref{eq-w3}, we obtain \eqref{eq-w2} by taking $\theta_1=\theta_0\tilde{\theta}_0$ which completes the proof.
\end{proof}

By combining with Theorem \ref{t-sw} and Lemma \ref{l-w}, we immediately obtain the following interpolation result.
\begin{cor}\label{c-w}
Let $\lambda\in [1,\infty)$, $p,q\in (\lambda,\infty)$, $u\in A_{p/\lambda}$ and $v\in A_{q/\lambda}$. Then
\begin{equation*}
L^p(u)=[L^r(w_0),L^s(w_1)]_{\theta_0},\ \ L^q(v)=[L^r(w_0),L^s(w_1)]_{\theta_1}
\end{equation*}
for some $\theta_0,\theta_1\in (0,1)$, $r,s\in (\lambda,\infty)$, $w_0\in A_{r/\lambda}$ and $w_1\in A_{s/\lambda}$.
\end{cor}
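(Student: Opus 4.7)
The plan is to reduce Corollary \ref{c-w} to Lemma \ref{l-w} followed by Theorem \ref{t-sw} via a straightforward rescaling in $\lambda$. Since the role of $\lambda$ is only to renormalize exponents and weights, no genuinely new idea beyond the $\lambda=1$ case should be needed.

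First, I would set $P := p/\lambda$ and $Q := q/\lambda$, so that $P,Q \in (1,\infty)$, $u \in A_P$ and $v \in A_Q$. Applying Lemma \ref{l-w} to the pair $(u,v)$ with exponents $(P,Q)$, I obtain $\theta_0,\theta_1 \in (0,1)$, $R,S \in (1,\infty)$ and weights $w_0 \in A_R$, $w_1 \in A_S$ with
$$\frac{1}{P} = \frac{1-\theta_0}{R} + \frac{\theta_0}{S}, \qquad u^{1/P} = w_0^{(1-\theta_0)/R}\, w_1^{\theta_0/S},$$
together with the analogous identities for $Q$, $v$ and $\theta_1$.

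Next, I would set $r := \lambda R$ and $s := \lambda S$, giving $r,s \in (\lambda,\infty)$ and, by construction, the memberships $w_0 \in A_{r/\lambda}$ and $w_1 \in A_{s/\lambda}$. Dividing the exponent identities by $\lambda$ and raising the weight identities to the power $1/\lambda$ produces
$$\frac{1}{p} = \frac{1-\theta_0}{r} + \frac{\theta_0}{s}, \qquad u^{1/p} = w_0^{(1-\theta_0)/r}\, w_1^{\theta_0/s},$$
which is precisely the hypothesis \eqref{eq-pw} of Theorem \ref{t-sw}; the analogous pair holds for $q$, $v$ and $\theta_1$. Invoking Theorem \ref{t-sw} twice then delivers the two claimed complex-interpolation identities $L^p(u)=[L^r(w_0),L^s(w_1)]_{\theta_0}$ and $L^q(v)=[L^r(w_0),L^s(w_1)]_{\theta_1}$.

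No real obstacle arises here: the whole argument is bookkeeping of exponents and weight powers under the rescaling $P \mapsto p = \lambda P$. The only points to double-check are that $R,S>1$ forces $r,s>\lambda$ (immediate from $r=\lambda R$, $s=\lambda S$) and that $w_j \in A_R$ is literally the same statement as $w_j \in A_{r/\lambda}$, which holds by construction.
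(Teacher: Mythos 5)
Your proposal is correct and is exactly the argument the paper intends: the paper states Corollary \ref{c-w} as an immediate consequence of Lemma \ref{l-w} and Theorem \ref{t-sw}, and your rescaling $P=p/\lambda$, $Q=q/\lambda$, $r=\lambda R$, $s=\lambda S$ (with the exponent identities divided by $\lambda$ and the weight identities raised to the power $1/\lambda$) is precisely the bookkeeping needed to combine them. All the details check out, including $r,s>\lambda\geq 1$ so that Theorem \ref{t-sw} applies and the memberships $w_0\in A_{r/\lambda}$, $w_1\in A_{s/\lambda}$ hold by construction.
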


Now we are ready to recover Hyt\"{o}nen's extrapolation theorem in \cite[Theorem 1.2]{H}; see also \cite{SVW} for the weighted compactness of Calder\'on-Zygmund operators.

\begin{thm}[\cite{H}]\label{t-h}
Let $\lambda\in [1,\infty)$, $p_1\in (\lambda,\infty)$ and $T$ be a linear operator simultaneously defined and bounded on $L^{p_1}(\tilde{w})$ for all $\tilde{w}\in A_{p_1/\lambda}$, with the operator norm dominated by some increasing function of $[\tilde{w}]_{A_{p_1/\lambda}}$. Suppose in addition that $T$ is compact on $L^{p_1}(w_1)$ for some $w_1\in A_{p_1/\lambda}$. Then $T$ is compact on $L^p(w)$ for all $p\in (\lambda,\infty)$ and all $w\in A_{p/\lambda}$.
\end{thm}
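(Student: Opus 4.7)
The plan is to realize both the space where compactness is given, namely $L^{p_1}(w_1)$, and an arbitrary target space $L^p(w)$ as complex interpolation spaces of a \emph{single} Banach couple, and then apply Cwikel's compact interpolation theorem (Theorem \ref{t-c}). This is exactly the scenario Corollary \ref{c-w} was designed for.

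Fix $p\in(\lambda,\infty)$ and $w\in A_{p/\lambda}$. Applying Corollary \ref{c-w} with $u=w$, $v=w_1$, and exponents $p,p_1$, I obtain $\theta_0,\theta_1\in(0,1)$, $r,s\in(\lambda,\infty)$, and weights $w_{*,0}\in A_{r/\lambda}$, $w_{*,1}\in A_{s/\lambda}$ such that
\[
L^p(w)=[L^r(w_{*,0}),L^s(w_{*,1})]_{\theta_0},\qquad L^{p_1}(w_1)=[L^r(w_{*,0}),L^s(w_{*,1})]_{\theta_1}.
\]
Thus the common Banach couple is $(X_0,X_1):=(L^r(w_{*,0}),L^s(w_{*,1}))$, and both of the weighted Lebesgue spaces in question sit on the interpolation scale determined by this couple.

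Next I invoke the classical Rubio de Francia extrapolation theorem (in its version shifted by $\lambda$): from the hypothesis that $T$ is bounded on $L^{p_1}(\tilde w)$ for every $\tilde w\in A_{p_1/\lambda}$, with norm controlled by an increasing function of $[\tilde w]_{A_{p_1/\lambda}}$, one concludes boundedness of $T$ on $L^\rho(\tilde w)$ for every $\rho\in(\lambda,\infty)$ and every $\tilde w\in A_{\rho/\lambda}$. In particular, $T$ is bounded on both endpoints $X_0=L^r(w_{*,0})$ and $X_1=L^s(w_{*,1})$, so the standing boundedness hypothesis of Cwikel's theorem is satisfied (boundedness into $X_0+X_1$ follows at once).

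Now the two assumptions of Theorem \ref{t-c} are in place for the couple $(X_0,X_1)=(Y_0,Y_1)$: bounded endpoints, and compactness on at least one interpolation space, namely at $\theta=\theta_1$, since $[X_0,X_1]_{\theta_1}=L^{p_1}(w_1)$ and $T$ is compact there by hypothesis. Cwikel's theorem then delivers compactness of $T$ on $[X_0,X_1]_\theta$ for every $\theta\in(0,1)$; choosing $\theta=\theta_0$ yields compactness on $L^p(w)$, completing the argument.

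The main obstacle is really the interpolation identity bundled in Corollary \ref{c-w}: one needs a single couple $(X_0,X_1)$ whose scale passes through both weighted spaces simultaneously, and it is not obvious a priori that given two arbitrary pairs $(p,w)$ and $(p_1,w_1)$ with $w\in A_{p/\lambda}$ and $w_1\in A_{p_1/\lambda}$, such a couple exists at all. The two-step application of Hyt\"onen's $A_p$-extrapolation lemma (Theorem \ref{t-h1}) used to prove Lemma \ref{l-w} is precisely what resolves this; once that structural fact is available, the rest of the argument is essentially mechanical.
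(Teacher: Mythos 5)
Your argument is correct and coincides with the paper's own proof: both first upgrade the hypothesis to boundedness on all $L^{\rho}(\tilde w)$, $\tilde w\in A_{\rho/\lambda}$, via rescaled Rubio de Francia extrapolation, then use Corollary \ref{c-w} to place $L^p(w)$ and $L^{p_1}(w_1)$ on a common complex interpolation scale, and finally apply Cwikel's Theorem \ref{t-c} to transfer compactness from $\theta_1$ to $\theta_0$. No substantive difference from the paper's route.
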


\begin{proof}
First, by a rescaling version of the Rubio de Francia weighted extrapolation theorem (see, for example, \cite[Theorem 1.1]{H}), we have that $T$ is a bounded linear operator on $L^p(w)$ for all $p\in (\lambda,\infty)$ and all $w\in A_{p/\lambda}$. Next, for any fixed $p\in (\lambda,\infty)$ and $w\in A_{p/\lambda}$, since $T$ is compact on $L^{p_1}(w_1)$ for $p_1\in (\lambda,\infty)$ and some $w_1\in A_{p_1/\lambda}$, by Corollary \ref{c-w},
\begin{equation*}
L^p(w)=[L^r(w_0),L^s(w_2)]_{\theta_0},\ \ L^{p_1}(w_1)=[L^r(w_0),L^s(w_2)]_{\theta_1}
\end{equation*}
for some $\theta_0,\theta_1\in (0,1)$, $r,s\in (\lambda,\infty)$, $w_0\in A_{r/\lambda}$ and $w_2\in A_{s/\lambda}$. Finally, by Theorem \ref{t-c} with $X_0=Y_0=L^r(w_0)$ and $X_1=Y_1=L^s(w_2)$, we obtain that $T$ is compact on $L^p(w)$. This completes the proof.
\end{proof}

Note that Theorem \ref{t-h} only deals with linear operators and the classical Muckenhoupt $A_p$ weight class. It would be interesting to extend it to the case of sublinear operators (e.g., see \cite{WX}), or more general weight classes (e.g., see \cite{SVW}).

\section{Extensions in the two-weight setting}\label{s-3}
In this section, we present the proofs of Theorems \ref{t-main} and \ref{t-main2}. Our abstract tool is the following complex interpolation theorem of compactness due to Cwikel and Kalton \cite{CK}.

\begin{thm}[\cite{CK}]\label{t-ck}
Let $(X_{0}, X_{1})$ and $(Y_{0}, Y_{1})$ be Banach couples and $T$ be a linear operator such that $T: X_{0}+X_{1}\to Y_{0}+Y_{1}$ and $T: X_{j} \to Y_{j}$ boundedly for $j=0,1$. Suppose moreover that $T: X_{1} \to Y_{1}$ is compact. Then $T:[X_{0}, X_{1}]_{\theta} \to [Y_{0}, Y_{1}]_{\theta}$ is compact for $\theta \in(0,1)$ under any of the following four side conditions:
\begin{enumerate}
\item[$(a)$] $X_{1}$ has the UMD (unconditional martingale differences) property;
\item[$(b)$] $X_{1}$ is reflexive, and $X_{1}=[X_{0}, E]_{\alpha}$ for some Banach space $E$ and $\alpha \in (0,1)$;
\item[$(c)$] $Y_{1}=[Y_{0}, F]_{\beta}$ for some Banach space $F$ and $\beta \in(0,1)$;
\item[$(d)$] $X_{0}$ and $X_{1}$ are both complexified Banach lattices of measurable functions on a common measure space.
\end{enumerate}
\end{thm}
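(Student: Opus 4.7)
The result is the Cwikel--Kalton theorem from \cite{CK}; my plan is to sketch the common strategy underlying all four side conditions, while acknowledging that the technical heart of the proof would ultimately have to be cited. The overall idea is to prove compactness of the interpolated operator by approximating it, in the $[X_0,X_1]_\theta \to [Y_0,Y_1]_\theta$ operator norm, by operators whose compactness on the interpolated spaces is evident, and then using that a uniform limit of compact operators is compact.

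The plan proceeds in three stages. First, use the compactness of $T\colon X_1 \to Y_1$ to produce, for each $\varepsilon>0$, an approximant $T_\varepsilon$ such that the residual $R_\varepsilon := T - T_\varepsilon$ satisfies $\|R_\varepsilon\|_{X_1 \to Y_1} < \varepsilon$. Second, arrange that $T_\varepsilon$ extends to $X_0 + X_1$ in a way that (i) the extension is bounded from $X_0$ to $Y_0$ with norm controlled by $\|T\|_{X_0 \to Y_0}$ alone, and (ii) $T_\varepsilon$ is itself compact from $[X_0,X_1]_\theta$ to $[Y_0,Y_1]_\theta$ (e.g.\ because it has finite rank or is built from truncated pieces lying in $X_0 \cap X_1$). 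Third, invoke the standard complex interpolation inequality for operators,
$$\|R_\varepsilon\|_{[X_0,X_1]_\theta \to [Y_0,Y_1]_\theta} \le \|R_\varepsilon\|_{X_0 \to Y_0}^{1-\theta}\,\|R_\varepsilon\|_{X_1 \to Y_1}^{\theta},$$
to conclude that the residual has norm of order $\varepsilon^\theta$; then $T$ is a norm-limit of compact operators $T_\varepsilon$ on $[X_0,X_1]_\theta \to [Y_0,Y_1]_\theta$, and hence compact.

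The main obstacle is the second stage: constructing an approximant $T_\varepsilon$ that is simultaneously well-behaved on both $X_1$ and $X_0$, and whose compactness transfers to the interpolated spaces. The role of the side conditions (a)--(d) is precisely to make this possible. For case (d)---the case directly relevant to the applications in this paper, since weighted $L^p$ spaces are complexified Banach lattices---I would use the Calder\'on product representation $[X_0,X_1]_\theta = X_0^{1-\theta} \cdot X_1^{\theta}$ together with lattice truncations (level-set cut-offs of the form $|f|\le N$ and localizations to sets of finite measure) to replace the endpoint pieces of an approximant with atoms sitting in $X_0 \cap X_1$, for which the $X_0 \to Y_0$ boundedness of $T$ provides the needed uniform control. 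Cases (b) and (c) reduce, via the reiteration theorem, to iterated applications of the classical interpolation-of-compactness result (Theorem \ref{t-c}), with the given representations of $X_1$ or $Y_1$ as intermediate spaces carrying compactness from one rung to the next. Case (a) uses UMD-valued Fourier multiplier estimates (vector-valued Hilbert transforms on $X_1$-valued analytic functions on the strip) to smooth the approximants. In every case the delicate point is the simultaneous control on both endpoints, which is the Cwikel--Kalton construction proper, and beyond this sketch I would defer to \cite{CK}.
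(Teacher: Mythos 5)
This statement is quoted verbatim (with the indices $0$ and $1$ swapped) from Cwikel--Kalton \cite{CK}; the paper offers no proof of it, so there is no internal argument to compare your sketch against. Your outline does capture the standard one-sided approximation strategy --- write $T=T_\varepsilon+R_\varepsilon$ with $T_\varepsilon$ compact on the interpolated spaces, control $\|R_\varepsilon\|_{X_0\to Y_0}$ uniformly and $\|R_\varepsilon\|_{X_1\to Y_1}<\varepsilon$, and conclude via the Calder\'on interpolation inequality and the fact that a norm limit of compact operators is compact --- and your account of how the side conditions enter (reiteration plus Theorem \ref{t-c} for (b) and (c), the Calder\'on product $X_0^{1-\theta}X_1^{\theta}$ and lattice truncations for (d), vector-valued multiplier estimates for (a)) is consistent with the structure of the Cwikel--Kalton argument. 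But you should be clear that what you have written is not a proof: the entire content of the theorem lies in constructing $T_\varepsilon$ so that it is simultaneously compact on the interpolated couple and leaves a residual that is small on $X_1\to Y_1$ while remaining bounded on $X_0\to Y_0$ (note that compactness of $T:X_1\to Y_1$ alone does not even yield finite-rank approximants without an approximation property, let alone ones controlled at the other endpoint). Since you explicitly defer exactly this step to \cite{CK}, your proposal is an accurate annotated citation rather than an independent argument --- which, for the purposes of this paper, is precisely how the result is used.
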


We have swapped the roles of the indices $0$ and $1$ in comparison to \cite{CK}. It is known that all four side conditions hold for the weighted Lebesgue spaces, see \cite[Corollary 3.3]{H}. To obtain our extrapolation theorems of two-weight compactness, we need the following key result which is essentially an extension of the reverse H\"{o}lder inequality.
\begin{lem}\label{l-tw}
Let $p,q\in (1,\infty)$, $\sigma,\lambda\in A_p$ and $w\in A_q$. Then there exist $\theta\in (0,1)$, $t\in (1,\infty)$ and $\sigma_0,\lambda_0\in A_t$ such that
\begin{equation}
\frac{1}{p}=\frac{1-\theta}{t}+\frac{\theta}{q},\ \ \sigma^{\frac{1}{p}}=\sigma^{\frac{1-\theta}{t}}_0w^{\frac{\theta}{q}},\ \ \lambda^{\frac{1}{p}}=\lambda^{\frac{1-\theta}{t}}_0w^{\frac{\theta}{q}}.\label{eq-ptw}
\end{equation}
\end{lem}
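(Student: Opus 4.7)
Following the pattern of Lemma \ref{l-w}, which iterates Theorem \ref{t-h1} twice, I plan to prove Lemma \ref{l-tw} by a similar iterative application. The structural novelty is that the factor $w^{\theta/q}$ (with $w\in A_q$ fixed) must appear \emph{identically} in the factorizations of both $\sigma^{1/p}$ and $\lambda^{1/p}$, while only the $A_t$-weights $\sigma_0,\lambda_0$ are free to differ. This rigidity means the bookkeeping device of Lemma \ref{l-w} (chaining Theorem \ref{t-h1} through a common intermediate weight) does not transfer directly.

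The first step is to apply Theorem \ref{t-h1} to $\sigma\in A_p$ and $w\in A_q$, producing $\theta\in(0,1)$, $t\in(1,\infty)$, and $\sigma_0\in A_t$ with
$$\frac{1}{p}=\frac{1-\theta}{t}+\frac{\theta}{q},\qquad \sigma^{1/p}=\sigma_0^{(1-\theta)/t}w^{\theta/q}.$$
The third identity of \eqref{eq-ptw} then forces
$$\lambda_0:=\lambda^{t/(p(1-\theta))}w^{-t\theta/(q(1-\theta))},$$
so that the entire argument reduces to showing $\lambda_0\in A_t$.

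This verification is the main obstacle, since the parameters $\theta,t$ yielded by Theorem \ref{t-h1} applied to $(\sigma,w)$ may a priori fail to make $\lambda_0\in A_t$: they depend on $[\sigma]_{A_p}$ rather than $[\lambda]_{A_p}$. I would overcome this in one of two ways. The first is to invoke the openness of the $A_p$ classes (a consequence of the reverse H\"older inequality): one inspects the proof of Theorem \ref{t-h1} to see that its conclusion in fact holds for \emph{every} $\theta$ in a nontrivial interval $(0,\theta^*)$, with $\theta^*$ depending only on $[\sigma]_{A_p}$ and $[w]_{A_q}$, and then chooses $\theta$ small enough to lie in the analogous interval for $\lambda$ as well. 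Since $t\to p$, $\sigma_0\to\sigma$, and $\lambda_0\to\lambda$ as $\theta\to 0^+$, for sufficiently small $\theta$ both weights inherit $A_t$-membership from $\sigma,\lambda\in A_p$ by a perturbation argument. The second approach is to chain Theorem \ref{t-h1} in the Lemma \ref{l-w} style: apply it also to $(\lambda,\sigma_0)$ to obtain a factorization $\lambda^{1/p}=\lambda_0^{(1-\theta')/t'}\sigma_0^{\theta'/t}$, then substitute $\sigma_0^{\theta'/t}=(\sigma^{1/p}w^{-\theta/q})^{\theta'/(1-\theta)}$ using Step 1 and absorb the emerging $\sigma$-factor into a redefined $\lambda_0$, whose $A_t$-membership must then be re-checked via the Hyt\"onen-type construction. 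The substitution-approach (ii) is conceptually closer to the template of Lemma \ref{l-w} but requires extra $A_t$-bookkeeping; approach (i) sidesteps this at the cost of reading into the proof of the cited Theorem \ref{t-h1}.
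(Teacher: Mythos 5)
Your first route is essentially the paper's proof. The paper solves \eqref{eq-ptw} explicitly for $t(\theta)=\frac{1-\theta}{1/p-\theta/q}$, $\sigma_0(\theta)=\sigma^{t/(p(1-\theta))}w^{-t\theta/(q(1-\theta))}$ and $\lambda_0(\theta)=\lambda^{t/(p(1-\theta))}w^{-t\theta/(q(1-\theta))}$, and then reruns the computation of Hyt\"{o}nen's Lemma 4.3 \emph{twice in parallel}: H\"{o}lder's inequality bounds $\langle\sigma_0\rangle_Q\langle\sigma_0^{1-t'}\rangle_Q^{t-1}$ by averages of $\sigma^{r(\theta)}$, $(\sigma^{1-p'})^{s(\theta)}$, $(w^{1-q'})^{r(\theta)}$, $w^{s(\theta)}$ with $r(0)=s(0)=1$, and since a single reverse H\"{o}lder exponent $\eta>1$ works simultaneously for the finitely many weights $\sigma,\lambda,\sigma^{1-p'},\lambda^{1-p'},w,w^{1-q'}$, any $\theta$ small enough that $r(\theta),s(\theta)\leq\eta$ yields $[\sigma_0]_{A_t}$ and $[\lambda_0]_{A_t}$ finite at once --- exactly your observation that the threshold $\theta^*$ in Theorem \ref{t-h1} depends only on the characteristics, so one takes the smaller of the two thresholds. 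One caveat: the phrase ``by a perturbation argument'' must not be read as continuity of $\theta\mapsto[\sigma_0(\theta)]_{A_t}$ (pointwise convergence $\sigma_0\to\sigma$ gives no control of Muckenhoupt characteristics); the actual mechanism is the uniform bound $[\sigma_0]_{A_t}\lesssim[\sigma]_{A_p}^{q/(q-\theta p)}[w]_{A_q}^{\theta p/(q-\theta p)}$ supplied by the reverse H\"{o}lder step, which is what the paper writes out. Your second route (chaining Theorem \ref{t-h1} through $\sigma_0$ in the style of Lemma \ref{l-w}) is not what the paper does and does not close: the second application of Theorem \ref{t-h1} produces its own pair $(\theta',t'')$ with no reason to be compatible with the $(\theta,t)$ already fixed by the first application, and the $A_t$-membership of the redefined $\lambda_0$ that you would still owe is precisely the computation of route (i). Commit to route (i) and carry out the verification.
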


We recall that a weight $w$ satisfies the reverse H\"{o}lder inequality for some $\eta\in (1,\infty)$ (write $w\in RH_{\eta}$) if $$[w]_{RH_{\eta}}:=\sup_{Q\subset \mathbb{R}^d}\laz w^{\eta}\raz^{\frac{1}{\eta}}_Q \laz w\raz^{-1}_Q <\infty.$$Notice that the reverse H\"{o}lder inequality can simultaneously hold for a finite number of the Muckenhoupt weights. From this fact, one may see that the proof of Lemma \ref{l-tw} below is similar to that in \cite[Lemma 4.3]{H} with some minor changes.

\begin{proof}[Proof of Lemma \ref{l-tw}]
By a simple calculation of \eqref{eq-ptw}, we have that
\begin{equation*}
t=t(\theta)=\frac{1-\theta}{\frac{1}{p}-\frac{\theta}{q}},\ \ \sigma_0=\sigma_0(\theta)=\sigma^{\frac{t}{p(1-\theta)}}w^{-\frac{t\theta}{q(1-\theta)}},\ \ \lambda_0=\lambda_0(\theta)=\lambda^{\frac{t}{p(1-\theta)}}w^{-\frac{t\theta}{q(1-\theta)}},
\end{equation*}
from which it suffices to show that there exists $\theta\in (0,1)$ such that $t\in (1,\infty)$ and $\sigma_0,\lambda_0\in A_t$. Note that $t(0)=p\in (1,\infty)$. Then by continuity, we have $t\in (1,\infty)$ for small enough $\theta>0$.

We now show that $\sigma_0,\lambda_0\in A_t$. Observe that the weights $\sigma$ and $\lambda$ are independent of each other, so we deal with these two weights separately in the same way as \cite[Lemma 4.3]{H}. Let
\begin{equation*}
\epsilon:= \frac{\theta p}{q'},\ \ \delta:=\frac{\theta p'}{q},\ \ r(\theta):=\frac{t(1+\epsilon)}{p(1-\theta)}=\frac{t(q'+\theta p)}{pq'(1-\theta)},\ \ s(\theta):=\frac{t'(1+\delta)}{p'(1-\theta)}=\frac{t'(q+\theta p')}{p'q(1-\theta)}.
\end{equation*}
A direct calculation yields that for any cube $Q\subset \mathbb{R}^d$,
\begin{align}\label{eq-twa}
\laz \sigma_0\raz_Q \laz \sigma_0^{1-t'}\raz_Q^{t-1}&= \laz \sigma^{\frac{t}{p(1-\theta)}}w^{-\frac{t\theta}{q(1-\theta)}} \raz_Q \laz \sigma^{-\frac{t'}{p(1-\theta)}}w^{\frac{t'\theta}{q(1-\theta)}} \raz_Q^{t-1} \\
&\leq \laz \sigma^{\frac{t(1+\epsilon)}{p(1-\theta)}} \raz^{\frac{1}{1+\epsilon}}_Q \laz w^{-\frac{t\theta}{q(1-\theta)} \frac{q'+\theta p}{\theta p}} \raz^{\frac{\epsilon}{1+\epsilon}}_Q \laz \sigma^{-\frac{t'(1+\delta)}{p(1-\theta)}}\raz_Q^{\frac{t-1}{1+\delta}} \laz w^{\frac{t'\theta}{q(1-\theta)} \frac{q+\theta p'}{\theta p'}} \raz_Q^{\frac{\delta (t-1)}{1+\delta}} \notag\\
&= \laz \sigma^{r(\theta)} \raz^{\frac{1}{1+\epsilon}}_Q \laz w^{(1-q')\frac{t(q'+\theta p)}{pq'(1-\theta)}} \raz^{\frac{\epsilon}{1+\epsilon}}_Q \laz \sigma^{(1-p')\frac{t'(1+\delta)}{p'(1-\theta)}}\raz_Q^{\frac{t-1}{1+\delta}} \laz w^{\frac{t'(q+\theta p')}{p'q(1-\theta)}} \raz_Q^{\frac{\delta (t-1)}{1+\delta}} \notag\\
&= \laz \sigma^{r(\theta)}\raz_Q^{\frac{1}{1+\epsilon}} \laz (w^{1-q'})^{r(\theta)}\raz_Q^{\frac{\epsilon}{1+\epsilon}} \laz(\sigma^{1-p'})^{s(\theta)}\raz_Q^{\frac{t-1}{1+\delta}} \laz w^{s(\theta)}\raz_Q^{\frac{\delta(t-1)}{1+\delta}},\notag
\end{align}
and similarly,
\begin{align}\label{eq-twb}
\laz \lambda_0\raz_Q \laz \lambda_0^{1-t'}\raz_Q^{t-1}&\leq \laz \lambda^{r(\theta)}\raz_Q^{\frac{1}{1+\epsilon}} \laz (w^{1-q'})^{r(\theta)}\raz_Q^{\frac{\epsilon}{1+\epsilon}} \laz(\lambda^{1-p'})^{s(\theta)}\raz_Q^{\frac{t-1}{1+\delta}} \laz w^{s(\theta)}\raz_Q^{\frac{\delta(t-1)}{1+\delta}}.
\end{align}

Observe that $r(0)=s(0)=1$. Another application of continuity yields that for any fixed $\eta>1$, $\max(r(\theta),s(\theta))\leq \eta$ for small enough $\theta>0$. Moreover, since $w\in A_q$, $w^{1-q'}\in A_{q'}$, $\sigma,\lambda \in A_p$ and $\sigma^{1-p'},\lambda^{1-p'}\in A_{p'}$, then there exists $\eta>1$ such that each of the above weights satisfies the reverse H\"{o}lder inequality. Thus, for small enough $\theta>0$, using the reverse H\"{o}lder inequality, it follows from \eqref{eq-twa} and \eqref{eq-twb} that
\begin{align*}
\laz \sigma_0\raz_Q \laz \sigma_0^{1-t'}\raz_Q^{t-1} &\lesssim \laz \sigma\raz_Q^{r(\theta)\frac{1}{1+\epsilon}} \laz w^{1-q'}\raz_Q^{r(\theta)\frac{\epsilon}{1+\epsilon}} \laz\sigma^{1-p'}\raz_Q^{s(\theta)\frac{t-1}{1+\delta}} \laz w\raz_Q^{s(\theta)\frac{\delta(t-1)}{1+\delta}},\\
\laz \lambda_0\raz_Q \laz \lambda_0^{1-t'}\raz_Q^{t-1}&\lesssim \laz \lambda\raz_Q^{r(\theta)\frac{1}{1+\epsilon}} \laz w^{1-q'}\raz_Q^{r(\theta)\frac{\epsilon}{1+\epsilon}} \laz\lambda^{1-p'}\raz_Q^{s(\theta)\frac{t-1}{1+\delta}} \laz w\raz_Q^{s(\theta)\frac{\delta(t-1)}{1+\delta}}.
\end{align*}
Observe that
\begin{align*}
\laz \sigma\raz_Q^{r(\theta)\frac{1}{1+\epsilon}} \laz\sigma^{1-p'}\raz_Q^{s(\theta)\frac{t-1}{1+\delta}}&= \laz \sigma\raz_Q^{\frac{t}{p(1-\theta)}} \laz\sigma^{1-p'}\raz_Q^{(p-1)\frac{t}{p(1-\theta)}}\leq [\sigma]^{\frac{t}{p(1-\theta)}}_{A_p}=[\sigma]^{\frac{q}{q-p\theta}}_{A_p},\\
\laz w^{1-q'}\raz_Q^{r(\theta)\frac{\epsilon}{1+\epsilon}} \laz w\raz_Q^{s(\theta)\frac{\delta(t-1)}{1+\delta}}&= \laz w^{1-q'}\raz_Q^{(q-1)\frac{t \theta}{q(1-\theta)}} \laz w\raz_Q^{\frac{t \theta}{q(1-\theta)}}\leq [w]^{\frac{t \theta}{q(1-\theta)}}_{A_q}=[w]^{\frac{\theta p}{q-\theta p}}_{A_q}.
\end{align*}
Then we have
\begin{align*}
[\sigma_0]_{A_t} \lesssim [\sigma]^{\frac{q}{q-\theta p}}_{A_p}[w]^{\frac{\theta p}{q-\theta p}}_{A_q}<\infty.
\end{align*}
Similarly,
\begin{align*}
[\lambda_0]_{A_t} \lesssim [\lambda]^{\frac{q}{q-\theta p}}_{A_p}[w]^{\frac{\theta p}{q-\theta p}}_{A_q}<\infty.
\end{align*}
This completes the proof.
\end{proof}

By combining with Theorem \ref{t-sw} and Lemma \ref{l-tw}, we immediately obtain the following interpolation result in the two-weight setting.

\begin{cor}\label{c-tw}
Let $p,q\in (1,\infty)$, $\sigma,\lambda\in A_p$ and $w\in A_q$. Then
\begin{equation}
L^p(\sigma)=[L^t(\sigma_0),L^q(w)]_{\theta},\ \ L^p(\lambda)=[L^t(\lambda_0),L^q(w)]_{\theta} \label{eq-tw}
\end{equation}
for some $\theta\in (0,1)$, $t\in (1,\infty)$ and $\sigma_0,\lambda_0\in A_t$.
\end{cor}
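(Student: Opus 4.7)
The plan is to directly combine Lemma~\ref{l-tw} with the Stein--Weiss interpolation theorem (Theorem~\ref{t-sw}), with no further work required. Concretely, I would first invoke Lemma~\ref{l-tw} with the given data $p,q\in(1,\infty)$, $\sigma,\lambda\in A_p$ and $w\in A_q$ to produce parameters $\theta\in(0,1)$, $t\in(1,\infty)$, and weights $\sigma_0,\lambda_0\in A_t$ such that
\begin{equation*}
\frac{1}{p}=\frac{1-\theta}{t}+\frac{\theta}{q},\qquad \sigma^{1/p}=\sigma_0^{(1-\theta)/t}\,w^{\theta/q},\qquad \lambda^{1/p}=\lambda_0^{(1-\theta)/t}\,w^{\theta/q}.
\end{equation*}
The essential point here is that Lemma~\ref{l-tw} is engineered precisely so that the \emph{same} pair $(\theta,t)$ and the \emph{same} second endpoint weight $w$ work simultaneously for both $\sigma$ and $\lambda$.

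Next, I would apply Theorem~\ref{t-sw} twice with this common $\theta$ and exponent $t$. Reading \eqref{eq-pw} with $(w_0,w_1)=(\sigma_0,w)$ gives the identity $L^p(\sigma)=[L^t(\sigma_0),L^q(w)]_\theta$, and with $(w_0,w_1)=(\lambda_0,w)$ gives $L^p(\lambda)=[L^t(\lambda_0),L^q(w)]_\theta$. This yields \eqref{eq-tw} and completes the proof.

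There is no real obstacle here; all the genuine content is packaged inside Lemma~\ref{l-tw}, whose delicate part was arranging for a single $\theta$ and a single $w$-slot to accommodate both $\sigma$ and $\lambda$. Once that lemma is in hand, the Stein--Weiss identification is a black-box application and the corollary follows by formal substitution.
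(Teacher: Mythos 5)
Your proposal is correct and matches the paper exactly: the paper derives Corollary~\ref{c-tw} by the same immediate combination of Lemma~\ref{l-tw} (to produce a common $\theta$, $t$, and shared endpoint $L^q(w)$) with two applications of the Stein--Weiss identity of Theorem~\ref{t-sw}. Nothing further is needed.
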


By Theorem \ref{t-sw} and using the same method as Lemma \ref{l-tw} (see also \cite[Lemma 3.4]{HL}) with some minor changes, we further conclude the following interpolation result. The details are omitted here.
\begin{lem}\label{l-tw2}
Let $1<p\leq q<\infty$, $1<p_1\leq q_1<\infty$, $\sigma,\lambda\in A_{p,q}$ and $w_1\in A_{p_1,q_1}$. Then there exist $\theta\in (0,1)$, $1<p_0\leq q_0<\infty$ and $\sigma_0,\lambda_0\in A_{p_0,q_0}$ such that
\begin{equation}
L^p(\sigma^p)=[L^{p_0}(\sigma^{p_0}_0),L^{p_1}(w^{p_1}_1)]_{\theta},\ \ L^q(\lambda^q)=[L^{q_0}(\lambda^{q_0}_0),L^{q_1}(w^{q_1}_1)]_{\theta}, \label{eq-tw2}
\end{equation}
where
\begin{equation*}
\frac{1}{p}=\frac{1-\theta}{p_0}+\frac{\theta}{p_1},\ \ \frac{1}{q}=\frac{1-\theta}{q_0}+\frac{\theta}{q_1},\ \ \sigma=\sigma_0^{1-\theta}w_1^{\theta},\ \ \lambda=\lambda_0^{1-\theta}w_1^{\theta}.
\end{equation*}
\end{lem}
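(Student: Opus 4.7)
The plan is to adapt the proof of Lemma \ref{l-tw} to the off-diagonal two-weight setting, replacing the single $A_p$ condition used there by the two-sided $A_{p,q}$ characterization. Solving the exponent relations in the conclusion for the free parameter $\theta$ gives
\[
p_0 = \frac{p p_1(1-\theta)}{p_1 - p\theta}, \quad q_0 = \frac{q q_1 (1-\theta)}{q_1 - q \theta}, \quad \sigma_0 = \sigma^{1/(1-\theta)} w_1^{-\theta/(1-\theta)}, \quad \lambda_0 = \lambda^{1/(1-\theta)} w_1^{-\theta/(1-\theta)}.
\]
Since $p_0(0) = p$ and $q_0(0) = q$ with $1 < p \leq q < \infty$, continuity gives $1 < p_0 \leq q_0 < \infty$ for all sufficiently small $\theta > 0$, and the interpolation identities in \eqref{eq-tw2} follow from Theorem \ref{t-sw} applied to the pairs $(L^{p_0}(\sigma_0^{p_0}), L^{p_1}(w_1^{p_1}))$ and $(L^{q_0}(\lambda_0^{q_0}), L^{q_1}(w_1^{q_1}))$ once the routine exponent identities are checked.

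The substantive step is to verify $\sigma_0, \lambda_0 \in A_{p_0, q_0}$ for $\theta$ small. By symmetry I treat $\sigma_0$. Fix a cube $Q$ and expand
\[
\sigma_0^{q_0} = \sigma^{q_0/(1-\theta)} w_1^{-q_0 \theta/(1-\theta)}, \qquad \sigma_0^{-p_0'} = \sigma^{-p_0'/(1-\theta)} w_1^{p_0' \theta/(1-\theta)},
\]
then apply H\"{o}lder's inequality to each average with conjugate parameters $(1+\epsilon, (1+\epsilon)/\epsilon)$ and $(1+\delta, (1+\delta)/\delta)$, where $\epsilon = \epsilon(\theta)$ and $\delta = \delta(\theta)$ are chosen so that after H\"{o}lder the $w_1$-exponents become exactly $-p_1'$ and $q_1$. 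A direct computation shows $\epsilon, \delta \to 0^+$ as $\theta \to 0^+$, and the corresponding $\sigma$-exponents take the shape $q\,r(\theta)$ and $-p'\,\tilde r(\theta)$ with $r(\theta), \tilde r(\theta) \to 1$.

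Because $\sigma \in A_{p,q}$ implies $\sigma^q, \sigma^{-p'} \in A_\infty$, and likewise $w_1^{q_1}, w_1^{-p_1'} \in A_\infty$, these four $A_\infty$ weights share a common reverse H\"{o}lder exponent $\eta > 1$. For $\theta$ small enough that $r(\theta), \tilde r(\theta) \leq \eta$, the reverse H\"{o}lder inequality converts $\langle \sigma^{q r(\theta)} \rangle_Q$ into $\langle \sigma^q \rangle_Q^{r(\theta)}$ and handles the other three averages similarly. The four resulting factors then regroup, by an exponent identity, into
\[
\langle \sigma_0^{q_0}\rangle_Q^{1/q_0}\langle \sigma_0^{-p_0'}\rangle_Q^{1/p_0'} \lesssim [\sigma]_{A_{p,q}}^{1/(1-\theta)}\,[w_1]_{A_{p_1,q_1}}^{\theta/(1-\theta)},
\]
uniformly in $Q$, so $\sigma_0 \in A_{p_0, q_0}$; the same argument gives $\lambda_0 \in A_{p_0,q_0}$. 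The main obstacle is the arithmetic bookkeeping: the parameters $\epsilon(\theta), \delta(\theta)$ must be tuned so that after reverse H\"{o}lder the four surviving exponents fit exactly into the shapes of $[\sigma]_{A_{p,q}}$ and $[w_1]_{A_{p_1,q_1}}$, which is precisely the off-diagonal counterpart of the balancing identity at the end of Lemma \ref{l-tw} and accounts for the ``minor changes'' the authors refer to.
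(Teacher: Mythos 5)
Your proposal is correct and is essentially the proof the paper has in mind: the paper omits the details, saying only that one argues ``by Theorem \ref{t-sw} and the same method as Lemma \ref{l-tw} with some minor changes,'' and your adaptation is exactly that. Your bookkeeping does close: with $\epsilon$ chosen so that $\frac{q_0\theta(1+\epsilon)}{\epsilon(1-\theta)}=p_1'$ and $\delta$ so that $\frac{p_0'\theta(1+\delta)}{\delta(1-\theta)}=q_1$, one gets $\frac{\epsilon}{(1+\epsilon)q_0}=\frac{\theta}{p_1'(1-\theta)}$ and $\frac{\delta}{(1+\delta)p_0'}=\frac{\theta}{q_1(1-\theta)}$, while the surviving $\sigma$-exponents after the reverse H\"older step are $\frac{1}{q(1-\theta)}$ and $\frac{1}{p'(1-\theta)}$, so the four factors regroup precisely into $[\sigma]_{A_{p,q}}^{1/(1-\theta)}[w_1]_{A_{p_1,q_1}}^{\theta/(1-\theta)}$ as you claim (and with your normalization the reverse H\"older inequality is only needed for the $\sigma$- and $\lambda$-factors, a harmless simplification of the paper's scheme in Lemma \ref{l-tw}).

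The one soft spot is your assertion that ``continuity gives $1<p_0\leq q_0<\infty$'': continuity preserves the strict inequality $p<q$, but if $p=q$ while $p_1<q_1$ then $\frac{1}{p_0}-\frac{1}{q_0}=-\frac{\theta}{1-\theta}\bigl(\frac{1}{p_1}-\frac{1}{q_1}\bigr)<0$ for every $\theta>0$, so the construction yields $p_0>q_0$ and the conclusion $\sigma_0,\lambda_0\in A_{p_0,q_0}$ is not even well posed. This is really a defect of the lemma as stated (it does not assume $\frac{1}{p}-\frac{1}{q}=\frac{1}{p_1}-\frac{1}{q_1}$), and it is harmless where the lemma is used, since in the proof of Theorem \ref{t-main2} that balance condition forces $\frac{1}{p_0}-\frac{1}{q_0}=\frac{1}{p_1}-\frac{1}{q_1}\geq 0$ exactly; but you should either add that hypothesis or restrict to $p<q$ rather than appeal to continuity for the non-strict inequality.
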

We now turn to verify our main results.
\begin{proof}[Proof of Theorem \ref{t-main}]
For any fixed $p\in (1,\infty)$ and $\sigma,\lambda\in A_p$, it suffices to show that $T$ is compact from $L^{p}(\sigma)$ to $L^{p}(\lambda)$. Recall that $T$ is compact on $L^q(w)$ for some $q\in (1,\infty)$ and some $w\in A_q$. By Corollary \ref{c-tw}, we have that \eqref{eq-tw} holds for some $\theta\in (0,1)$, $t\in (1,\infty)$ and $\sigma_0,\lambda_0\in A_t$. Since $T$ is bounded from $L^t(\sigma_0)$ to $L^t(\lambda_0)$, by Theorem \ref{t-ck}, we obtain that $T$ is compact from $L^{p}(\sigma)$ to $L^{p}(\lambda)$ by taking $X_0=L^t(\sigma_0)$, $Y_0=L^t(\lambda_0)$ and $X_1=Y_1=L^q(w)$. This completes the proof.
\end{proof}

\begin{proof}[Proof of Theorem \ref{t-main2}]
For any fixed $p,q\in (1,\infty)$ with $\frac{1}{p}-\frac{1}{q}=\frac{1}{p_1}-\frac{1}{q_1}$ and $\sigma,\lambda\in A_{p,q}$, it suffices to show that $T$ is compact from $L^{p}(\sigma^p)$ to $L^{q}(\lambda^q)$. Recall that for $1<p_1\leq q_1<\infty$ and $w_1\in A_{p_1,q_1}$, $T$ is compact from $L^{p_1}(w_1^{p_1})$ to $L^{q_1}(w_1^{q_1})$. By Lemma \ref{l-tw2}, we have that \eqref{eq-tw2} holds for some $\theta\in (0,1)$, $p_0,q_0\in (1,\infty)$ with $\frac{1}{p_0}-\frac{1}{q_0}=\frac{1}{p_1}-\frac{1}{q_1}$ and $\sigma_0,\lambda_0\in A_{p_0,q_0}$. Since $T$ is bounded from $L^{p_0}(\sigma^{p_0}_0)$ to $L^{q_0}(\lambda^{q_0}_0)$, by Theorem \ref{t-ck}, we obtain that $T$ is compact from $L^{p}(\sigma^p)$ to $L^{q}(\lambda^q)$ by taking $X_0=L^{p_0}(\sigma^{p_0}_0)$, $Y_0=L^{q_0}(\lambda^{q_0}_0)$, $X_1=L^{p_1}(w^{p_1}_1)$ and $Y_1=L^{q_1}(w^{q_1}_1)$. This completes the proof.
\end{proof}

\section{Applications to the two-weight compactness of linear operators}\label{s-4}

Notice that the unweighted compactness result is probably the most available and relevant case for most applications. Based on this fact, we point out that the nature of extrapolation of two-weight compactness (including the one-weight case) is essentially a combination of the reverse H\"{o}lder inequality and interpolation Theorem \ref{t-ck}. From this point of view, we will give two applications below to the two-weight compactness of linear operators.

Both of our two applications in this section deal with commutators of the form $$[b,T](f):=bT(f)-T(bf),$$ where the pointwise multiplier $b$ is a locally integrable function. We say that a locally integrable function $b$ belongs to ${\rm BMO}_w(\mathbb{R}^d)$ with $w\in A_{\infty}$ if $$\|b\|_{{\rm BMO}_w(\mathbb{R}^d)}:=\sup_{Q\subset \mathbb{R}^d} w(Q)^{-1}\int_Q |b(x)-\laz b\raz_Q|dx <\infty,$$ where the supremum is taken over all cubes $Q\subset \mathbb{R}^d$ and $w(Q):=\int_Q w$. When $w\equiv 1$, we simply write ${\rm BMO}_w(\mathbb{R}^d)$ as ${\rm BMO}(\mathbb{R}^d)$. We then define $${\rm CMO}(\mathbb{R}^d):=\overline{C^{\infty}_c(\mathbb{R}^d)}^{{\rm BMO}(\mathbb{R}^d)},$$ where $C^{\infty}_c(\mathbb{R}^d)$ denotes the space of smooth functions on $\mathbb{R}^d$ with compact support and the closure is in the BMO norm. We say that a function $b\in {\rm BMO}_{w}(\mathbb{R}^d)$ belongs to ${\rm CMO}_{w}(\mathbb{R}^d)$ with $w\in A_{\infty}$ if the following three conditions hold:
\begin{enumerate}
\item[$(a)$] $\lim\limits_{a \to 0^+}\sup\limits_{|Q|=a}w(Q)^{-1}\int_Q |b(x)-\laz b\raz_Q|dx=0;$
\item[$(b)$] $\lim\limits_{a \to \infty}\sup\limits_{|Q|=a}w(Q)^{-1}\int_Q |b(x)-\laz b\raz_Q|dx=0;$
\item[$(c)$] $\lim\limits_{a \to \infty}\sup\limits_{Q\cap Q(0,a)=\emptyset}w(Q)^{-1}\int_Q |b(x)-\laz b\raz_Q|dx=0;$
\end{enumerate}
where $Q(0,a)$ denotes the cube centered at $0$ with the side length $2a$ in $\mathbb{R}^d$. When $w\equiv 1$, it turns to be an equivalent characterization of ${\rm CMO}(\mathbb{R}^d)$ due to Uchiyama \cite[Lemma 3]{U}.
 \vspace{0.3cm}

\noindent {\bf 4.A. Commutators of Calder\'{o}n--Zygmund operators.} In our first application, we study the commutators of Calder\'{o}n--Zygmund operators. Recall that a linear operator $T$ is a Calder\'{o}n--Zygmund operator if it is an integral operator defined initially on $f\in C^{\infty}_c(\mathbb{R}^d)$: $$T(f)(x):=\int_{\mathbb{R}^d}K(x,y)f(y)dy,\ \ x\notin \supp(f),$$ and it extends to a bounded operator on $L^2(\mathbb{R}^d)$, where the kernel $K$ satisfies the standard estimates
\begin{align*}
|K(x,y)|\lesssim \frac{1}{|x-y|^d},\ \ x\not= y,
\end{align*}
and
\begin{align*}
|K(x+h,y)-K(x,y)|+|K(x,y+h)-K(x,y)|\lesssim \frac{|h|^{\delta}}{|x-y|^{d+\delta}}
\end{align*}
for all $|x-y|\geq 2|h|>0$ and some fixed $\delta\in (0,1]$.

The two-weight boundedness of Bloom type for commutators of Calder\'{o}n--Zygmund operators were first considered by Bloom \cite{B} for the Hilbert transform in one dimension. His result was then improved by Holmes, Lacey and Wick \cite{HLW} as follows:
\begin{thm}[\cite{HLW}]\label{t-hlw}
Let $T$ be a Calder\'{o}n--Zygmund operator, $p\in (1,\infty)$, and $\sigma, \lambda \in A_p$. Suppose $b \in {\rm BMO}_{v}(\mathbb{R}^{d})$ with $v={\sigma}^{\frac{1}{p}}\lambda^{-\frac{1}{p}}$. Then
$$\|[b,T]\|_{L^p(\sigma)\to L^p(\lambda)}\lesssim \|b\|_{{\rm BMO}_v(\mathbb{R}^d)}.$$
\end{thm}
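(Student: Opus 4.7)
My plan is to follow the sparse domination approach, which gives a clean reduction of the commutator to a positive dyadic operator. First I would invoke the now-standard pointwise sparse bound for commutators of Calder\'{o}n--Zygmund operators: for any $b\in L^1_{\mathrm{loc}}(\mathbb{R}^d)$ and $f\in C^{\infty}_c(\mathbb{R}^d)$, there exist $3^d$ sparse collections $\cs_1,\dots,\cs_{3^d}$ (depending on $f$) such that
\begin{equation*}
|[b,T]f(x)|\ls \sum_{j=1}^{3^d}\sum_{Q\in \cs_j}\bigl(|b(x)-\laz b\raz_Q|\laz |f|\raz_Q + \laz |b-\laz b\raz_Q||f|\raz_Q\bigr)\chi_Q(x).
\end{equation*}
This reduces the theorem to proving, for any sparse family $\cs$ and with $v=\sigma^{1/p}\lambda^{-1/p}$, the two-weight bound
\begin{equation*}
\bigl\|\sum_{Q\in\cs}\bigl(|b-\laz b\raz_Q|\laz|f|\raz_Q + \laz |b-\laz b\raz_Q||f|\raz_Q\bigr)\chi_Q\bigr\|_{L^p(\lambda)} \ls \|b\|_{\mathrm{BMO}_v(\mathbb{R}^d)}\|f\|_{L^p(\sigma)}.
\end{equation*}

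Next I would dualize: pairing against $g\in L^{p'}(\lambda)$ of unit norm, the contribution of the first term becomes $\sum_{Q\in\cs}\laz|f|\raz_Q \int_Q|b-\laz b\raz_Q|g\lambda\,dx$. The Bloom weight $v$ belongs to $A_2\subset A_\infty$ (since $\sigma,\lambda\in A_p$), so the generalized John--Nirenberg inequality yields $(\laz|b-\laz b\raz_Q|^s\raz_Q)^{1/s}\ls \|b\|_{\mathrm{BMO}_v}\,v(Q)/|Q|$ for every $s\in(1,\infty)$. Applying H\"older to the local integral with a suitable $s$ converts it into $\|b\|_{\mathrm{BMO}_v}\,v(Q)\cdot(\laz(g\lambda)^{s'}\raz_Q)^{1/s'}$; the Bloom identity $v^p\lambda=\sigma$ together with the joint $A_p$ condition on the pair $(\sigma,\lambda)$ then reshapes the remaining sum as a two-weight Carleson embedding. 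The second sparse form is handled symmetrically by interchanging the roles of $f$ and $g$.

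The main obstacle I expect is verifying the two-weight Carleson testing condition with the Bloom weight in place: one needs that the sparse averages, once rearranged via the disjoint Carleson sets $E_Q$ guaranteed by sparsity, produce an inequality of the shape $\sum_{Q\in\cs}\laz|f|\raz_Q\,v(Q)\,(\laz(g\lambda)^{s'}\raz_Q)^{1/s'}\ls \|f\|_{L^p(\sigma)}\|g\|_{L^{p'}(\lambda)}$ with a constant depending only on the $A_p$ constants. This is where the precise algebra of $v=\sigma^{1/p}\lambda^{-1/p}$ must be synchronized with the weighted maximal function inequalities on both $L^p(\sigma)$ and $L^{p'}(\lambda)$. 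Once this Carleson embedding is established, the stated commutator bound follows immediately from the pointwise sparse domination.
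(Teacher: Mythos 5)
First, a remark on scope: the paper does not prove Theorem \ref{t-hlw} at all --- it is imported verbatim from Holmes--Lacey--Wick \cite{HLW} and used as a black box, so there is no internal proof to compare against. Your sparse-domination route is the Lerner--Ombrosi--Rivera-R\'{\i}os strategy (reference \cite{LOR} of this paper), which is a genuinely different and by now standard way to reach the statement; the original argument of \cite{HLW} instead runs through Hyt\"{o}nen's dyadic representation theorem, reducing $T$ to Haar shifts and paraproducts and treating those by weighted $H^1$--${\rm BMO}$ duality. Your overall strategy is viable, but the sketch as written has two genuine gaps.

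The first is the John--Nirenberg step. The inequality $(\laz|b-\laz b\raz_Q|^s\raz_Q)^{1/s}\ls\|b\|_{{\rm BMO}_v(\mathbb{R}^d)}\,v(Q)/|Q|$ cannot hold for every $s\in(1,\infty)$: take $d=1$, $p=2$, $\sigma(x)=|x|^{-2/3}\in A_2$, $\lambda\equiv 1$, so that $v=\sigma^{1/2}=|x|^{-1/3}\in A_2$, and take $b=v$, which lies in ${\rm BMO}_v(\mathbb{R})$ with norm at most $2$. For $Q=[0,\epsilon]$ and any $s\geq 3$ the left-hand side is infinite while the right-hand side is finite. The correct self-improvement for Bloom ${\rm BMO}$ is the Muckenhoupt--Wheeden weighted John--Nirenberg inequality, in which the $L^s$ oscillation is measured against $v^{1-s}dx$ rather than Lebesgue measure (equivalently, the plain $L^s$ version only holds for $s$ in the reverse H\"{o}lder range of $v$, with constants depending on $[v]_{A_\infty}$); using the right version changes the subsequent H\"{o}lder step and the weight that lands on $g$. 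The second gap is that the actual crux --- the two-weight estimate $\sum_{Q\in\cs}\laz|f|\raz_Q\,v(Q)\,(\cdots)\ls\|f\|_{L^p(\sigma)}\|g\|_{L^{p'}(\lambda)}$ for the resulting sparse form --- carries essentially all of the content of the theorem, and you explicitly defer it (``the main obstacle I expect'') rather than prove it. This is precisely where one must split $v(Q)\leq\sigma(Q)^{1/p}\,\lambda^{1-p'}(Q)^{1/p'}$ by H\"{o}lder, pass to the disjoint sets $E_Q$ furnished by sparseness, and invoke the boundedness of $M_\sigma$ on $L^p(\sigma)$ and of $M_{\lambda^{1-p'}}$ on $L^{p'}(\lambda^{1-p'})$, together with the openness of the $A_p$ condition to absorb the auxiliary exponent $s'$. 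Until both points are carried out, the proposal is an outline of the \cite{LOR} strategy rather than a proof.
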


For the application of Theorem \ref{t-ck}, we need the following unweighted compactness result of Uchiyama \cite{U} (see also \cite[Theorem 2]{CC}).
\begin{thm}[\cite{U}]\label{t-u}
Let $T$ be a Calder\'{o}n--Zygmund operator and $b\in {\rm CMO}(\mathbb{R}^{d})$. Then the commutator $[b, T]$ is compact on $L^p(\mathbb{R}^d)$ for all $p\in (1,\infty)$.
\end{thm}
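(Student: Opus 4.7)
The plan is to combine the Fr\'echet--Kolmogorov compactness criterion in $L^p(\mathbb{R}^d)$ with a density argument exploiting the definition ${\rm CMO}(\mathbb{R}^d)=\overline{C_c^\infty(\mathbb{R}^d)}^{{\rm BMO}(\mathbb{R}^d)}$. Recall that a bounded subset $\mathcal{F}\subset L^p(\mathbb{R}^d)$ is precompact if and only if $\sup_{f\in\mathcal{F}}\int_{|x|>R}|f(x)|^p\,dx\to 0$ as $R\to\infty$ and $\sup_{f\in\mathcal{F}}\|f(\cdot+h)-f\|_{L^p}\to 0$ as $|h|\to 0$. First I would reduce to the case $b\in C_c^\infty(\mathbb{R}^d)$: by the Coifman--Rochberg--Weiss theorem, $b\mapsto[b,T]$ is Lipschitz from ${\rm BMO}(\mathbb{R}^d)$ into the Banach space of bounded linear operators on $L^p(\mathbb{R}^d)$; since the compact operators form a norm-closed subspace there, the set of $b\in{\rm BMO}(\mathbb{R}^d)$ for which $[b,T]$ is compact on $L^p$ is closed in BMO, so it suffices to treat $b\in C_c^\infty(\mathbb{R}^d)$.

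For such $b$ with $\supp b\subset B(0,M)$, I would apply Fr\'echet--Kolmogorov to the image $\mathcal{F}:=\{[b,T]f:\|f\|_{L^p(\mathbb{R}^d)}\leq 1\}$. Boundedness of $\mathcal{F}$ is immediate from the $L^p$-boundedness of $[b,T]$. For the tail, when $|x|>2M$ we have $b(x)=0$, so $[b,T]f(x)=-\int_{B(0,M)}K(x,y)b(y)f(y)\,dy$; the pointwise bound $|K(x,y)|\lesssim|x|^{-d}$ on this region combined with H\"older's inequality yields decay of order $|x|^{-d}$ uniformly in $\|f\|_{L^p}\leq 1$, which is $L^p$-integrable at infinity. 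For the translation equicontinuity, I would use the integral representation $[b,T]f(x)=\int L(x,y)f(y)\,dy$ with $L(x,y):=(b(x)-b(y))K(x,y)$, and apply the decomposition
\begin{equation*}
L(x+h,y)-L(x,y)=[b(x+h)-b(x)]K(x+h,y)+[b(x)-b(y)][K(x+h,y)-K(x,y)],
\end{equation*}
splitting the $y$-integration into the near-diagonal region $|x-y|<2|h|$ and the far region $|x-y|\geq 2|h|$, and combining the size and H\"older estimates of $K$ with the smoothness of $b$.

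The main obstacle is obtaining uniform $L^p$-operator-norm control of these pieces, since the singular integral structure precludes a naive application of Schur's test: the natural pointwise kernel bounds $|h||x-y|^{-d}$ and $|h|^\delta|x-y|^{-d-\delta}$ have row and column integrals that fail to decay with $|h|$. A standard workaround is to dominate the near-diagonal contribution by the Hardy--Littlewood maximal function of $f$ times $|h|^\delta$, exploiting the improved diagonal bound $|L(x,y)|\lesssim|x-y|^{1-d}$ that comes from the smoothness of $b$, and to handle the far-diagonal contribution by extracting a factor $|h|^\delta$ and controlling the remainder via the $L^p$-boundedness of $T$ (or of the associated maximal truncated singular integral). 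Once these pieces are in place, the equicontinuity condition holds uniformly in $\|f\|_{L^p}\leq 1$; Fr\'echet--Kolmogorov then delivers compactness of $[b,T]$ on $L^p(\mathbb{R}^d)$ for every $p\in(1,\infty)$, and the density reduction at the outset extends this to every $b\in{\rm CMO}(\mathbb{R}^d)$.
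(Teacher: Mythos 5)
The paper does not prove Theorem \ref{t-u}; it is quoted from Uchiyama \cite{U} (see also \cite[Theorem 2]{CC}), and your argument --- reduction to $b\in C_c^\infty(\mathbb{R}^d)$ via the Coifman--Rochberg--Weiss bound together with norm-closedness of the compact operators, followed by the Fr\'echet--Kolmogorov criterion with the near-/far-diagonal splitting --- is exactly the standard proof given in those references, and it is correct. The only point worth making explicit is the implicit use of the absolutely convergent representation $[b,T]f(x)=\int_{\mathbb{R}^d}(b(x)-b(y))K(x,y)f(y)\,dy$ for $x$ inside the support of $f$, which is legitimate for Lipschitz $b$ but is usually justified by first replacing $T$ with a smoothly truncated operator $T^{\sigma}$ and checking $\|[b,T]-[b,T^{\sigma}]\|_{L^p\to L^p}\lesssim \sigma\|\nabla b\|_{L^{\infty}}$.
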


The following sharp reverse H\"{o}lder inequality of the $A_p$ weights for $p\in (1,\infty]$ is useful.
\begin{thm}[\cite{P}]\label{t-p}
$(a)$ Let $p\in (1,\infty]$, $w\in A_p$ and
$$r_w=\begin{cases}
1+\frac{1}{2^{2p+d+1}[w]_{A_p}},& p\in(1,\infty);\\
1+\frac{1}{2^{d+11}[w]_{A_{\infty}}},& p=\infty.
\end{cases}
$$
Then for any cube $Q\subset \mathbb{R}^d$, $\laz w^{r_w}\raz^{\frac{1}{r_w}}_Q\leq 2\laz w\raz_Q.$

$(b)$ If a weight $w\in RH_r$ for some $r\in (1,\infty)$, then $[w]_{A_{\infty}}\lesssim [w]_{RH_r} r'$.
\end{thm}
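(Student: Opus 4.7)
The two parts of the theorem are of quite different character, so I would handle them separately. Part (b) is the more elementary one and admits a short H\"{o}lder-type argument. For any cube $Q$, H\"{o}lder's inequality gives
$$\int_Q M(w\chi_Q) \leq |Q|^{1/r'} \|M(w\chi_Q)\|_{L^r(\mathbb{R}^d)},$$
and one combines this with the sharp $L^r$-boundedness $\|M\|_{L^r\to L^r}\lesssim r'$ of the Hardy--Littlewood maximal operator---obtained by Marcinkiewicz interpolation between its weak-$(1,1)$ and $L^\infty$ bounds---together with the definition $\langle w^r\rangle_Q^{1/r}\leq [w]_{RH_r}\langle w\rangle_Q$. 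Assembling these yields $\int_Q M(w\chi_Q)\lesssim r'[w]_{RH_r}\langle w\rangle_Q|Q|=r'[w]_{RH_r}w(Q)$, and dividing by $w(Q)$ and taking the supremum over cubes gives $[w]_{A_\infty}\lesssim [w]_{RH_r}r'$.

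For part (a), the plan is a two-step scheme. First, I would reduce to the $A_\infty$ case by establishing the quantitative comparison $[w]_{A_\infty}\lesssim [w]_{A_p}$ for $p\in(1,\infty)$. This is a Coifman--Fefferman style layer-cake argument: one decomposes the level sets of $M(w\chi_Q)$ via the standard Calder\'{o}n--Zygmund stopping procedure and invokes the $L^p(w)$ boundedness of $M$ (with the natural dependence on $[w]_{A_p}$) to control the tail, producing the explicit dependence $2^{2p+d+1}[w]_{A_p}$ that appears in the statement. With this reduction in hand, it suffices to exhibit $\epsilon\asymp 1/[w]_{A_\infty}$ for which $\langle w^{1+\epsilon}\rangle_Q^{1/(1+\epsilon)}\leq 2\langle w\rangle_Q$ for every cube $Q$.

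The second step, which is the technical heart of the argument, I would carry out by an iterated Calder\'{o}n--Zygmund decomposition tailored to the $A_\infty$ constant. Fix a cube $Q_0$, set $\lambda = 2\langle w\rangle_{Q_0}$, and let $\{Q^1_j\}$ be the maximal dyadic subcubes of $Q_0$ with $\langle w\rangle_{Q^1_j}>\lambda$. The Fujii--Wilson form of the definition of $[w]_{A_\infty}$ supplies the crucial mass-decay estimate
$$\sum_j w(Q^1_j) \leq \lambda^{-1}\int_{Q_0}M(w\chi_{Q_0})\leq \frac{[w]_{A_\infty}}{2}\,w(Q_0),$$
while on the complement $Q_0\setminus \bigcup_j Q^1_j$ one has $w\leq \lambda$ pointwise almost everywhere. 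Iterating this on each $Q^1_j$ produces a tree of stopping cubes whose total $w$-mass decays geometrically across generations and on whose successive complements $w$ is bounded by geometrically growing levels; summing the contributions via a layer-cake computation yields $\int_{Q_0}w^{1+\epsilon}\lesssim \langle w\rangle_{Q_0}^\epsilon w(Q_0)$ for $\epsilon\asymp 1/[w]_{A_\infty}$. The main obstacle is the sharp bookkeeping of the numerical constants: to force the right-hand side of the reverse H\"{o}lder inequality to be exactly $2\langle w\rangle_Q$ (rather than a larger absolute multiple), one must optimize the stopping threshold $\lambda$ and the scale $\epsilon=c/[w]_{A_\infty}$ for a very particular choice of $c$, and it is this quantitative tightening that constitutes the delicate part of the argument.
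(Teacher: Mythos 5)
The paper gives no proof of this statement: Theorem \ref{t-p} is quoted verbatim from P\'erez's lecture notes \cite{P} (the sharp form of part $(a)$ is due to Hyt\"onen--P\'erez and Hyt\"onen--P\'erez--Rela), so your argument can only be compared with the literature, not with anything in the text. Your proof of part $(b)$ is correct and is the standard one: H\"older on $Q$, the Marcinkiewicz bound $\|M\|_{L^r\to L^r}\lesssim_d r'$, and the $RH_r$ condition assemble exactly as you describe.

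Part $(a)$, however, has a genuine gap at its central step. The displayed ``mass-decay estimate'' fails on two counts. First, Chebyshev's inequality at height $\lambda$ applied to $M(w\chi_{Q_0})$ controls the \emph{Lebesgue} measure $\sum_j|Q_j^1|$, not the $w$-measure $\sum_j w(Q_j^1)$; the inequality $\sum_j w(Q_j^1)\le \lambda^{-1}\int_{Q_0}M(w\chi_{Q_0})$ is unjustified, and with $\lambda=2\langle w\rangle_{Q_0}$ the right-hand side is $\tfrac{[w]_{A_\infty}}{2}|Q_0|$, not $\tfrac{[w]_{A_\infty}}{2}w(Q_0)$. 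Second, and decisively, even the corrected bound carries the factor $\tfrac{[w]_{A_\infty}}{2}\ge\tfrac12$, which is in general much larger than $1$: it is not a decay, so iterating it cannot yield the ``geometric decay of $w$-mass across generations'' on which your layer-cake summation rests. With the stopping threshold $2\langle w\rangle_{Q_0}$, the Fujii--Wilson condition alone gives nothing. The two standard repairs are: (i) for $p<\infty$, keep a dimensional threshold but extract the $w$-mass decay from the $A_p$ condition itself via $(|E|/|Q|)^p\le [w]_{A_p}\,w(E)/w(Q)$ applied to the complement of the stopping cubes --- this is where the factor $2^{2p}$ in $r_w$ actually originates, and it proves the $A_p$ case directly without passing through $A_\infty$; or (ii) for $A_\infty$, take the stopping threshold proportional to $2^{d+1}[w]_{A_\infty}\langle w\rangle_{Q_0}$ (which is precisely why $r_w-1\asymp [w]_{A_\infty}^{-1}$) and run the layer-cake/absorption argument of Hyt\"onen--P\'erez--Rela on $\int_Q (M^d_Q w)^{\epsilon}w\,dx$ rather than a generation-by-generation mass count. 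Relatedly, your opening reduction $[w]_{A_\infty}\lesssim [w]_{A_p}$ is true, but with a constant independent of $p$ (via Jensen and the exponential $A_\infty$ constant), so it cannot by itself ``produce the explicit dependence $2^{2p+d+1}[w]_{A_p}$'' as claimed.
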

For simplicity, we write $$\tilde{r}_{p,w}:=\min(r_w,r_{w^{1-p'}})=1+\frac{1}{2^{2\max(p,p')+d+1}[w]^{\max(1,p'-1)}_{A_p}}$$ for any $p\in (1,\infty)$ and $w\in A_p$. Note that if $w\in A_{\infty}$, then from Theorem \ref{t-p} $(a)$, $w\in RH_{r_w}$. It follows that for any fixed $\eta\in (1,r_w)$ and any cube $Q\subset \mathbb{R}^d$, $$\laz w^{\eta \frac{r_w}{\eta}}\raz^{\frac{\eta}{r_w}}_Q=\laz w^{r_w}\raz^{\frac{\eta}{r_w}}_Q\leq 2^{\eta} \laz w\raz^{\eta}_Q\leq 2^{\eta} \laz w^{\eta} \raz_Q,$$ which implies $w^{\eta}\in RH_{r_w/\eta}$. Thus, from Theorem \ref{t-p} $(b)$, $[w^{\eta}]_{A_\infty}\leq 2^{\eta}(r_w/\eta)'<\infty$ and hence $w^{\eta}\in A_{\infty}$.

Very recently, Lacey and Li \cite{LL} established the $L^p(\sigma)\to L^p(\lambda)$ compactness for commutators of Calder\'{o}n--Zygmund operators under the assumption $b \in {\rm CMO}_{v}(\mathbb{R}^{d})$ with $v={\sigma}^{\frac{1}{p}}\lambda^{-\frac{1}{p}}$, whose method relies on a fine decomposition of the Calder\'{o}n--Zygmund kernel and uses the idea of the approximation of compact operators. We note that a combination of Theorems \ref{t-hlw} and \ref{t-u}, together with Theorems \ref{t-ck} and \ref{t-p}, readily gives another version of the two-weight compactness of commutators of Calder\'{o}n--Zygmund operators.
\begin{cor}\label{c-ll}
Let $T$ be a Calder\'{o}n--Zygmund operator, $p\in (1,\infty)$, two weights $\sigma, \lambda \in A_p$. Suppose $$b\in \overline{\bigcup_{\eta\in (1,r_{\sigma,\lambda}]}{\rm BMO}_{v^{\eta}}(\mathbb{R}^{d})\cap {\rm CMO}(\mathbb{R}^{d})}^{{\rm BMO}_{v}(\mathbb{R}^{d})},$$ where
\begin{align*}
v={\sigma}^{\frac{1}{p}}\lambda^{-\frac{1}{p}}\in A_2,\ \ \ \ r_{\sigma,\lambda}&=\min(\tilde{r}_{p,\sigma},\tilde{r}_{p,\lambda})\\
&=1+\frac{1}{2^{2\max(p,p')+d+1}\max\big([\sigma]^{\max(1,p'-1)}_{A_p},[\lambda]^{\max(1,p'-1)}_{A_p}\big)}.
\end{align*}
Then the commutator $[b, T]$ is compact from $L^{p}(\sigma)$ to $L^{p}(\lambda)$.
\end{cor}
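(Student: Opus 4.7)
The plan is to combine the Holmes--Lacey--Wick boundedness estimate (Theorem \ref{t-hlw}), Uchiyama's unweighted compactness result (Theorem \ref{t-u}), and the Cwikel--Kalton complex interpolation theorem (Theorem \ref{t-ck}), with the sharp reverse H\"older inequality (Theorem \ref{t-p}) providing the mechanism for raising $A_p$ weights to a higher power while preserving membership in $A_p$. As a preliminary, I would note that $v=\sigma^{1/p}\lambda^{-1/p}\in A_2$: for every cube $Q$, two applications of H\"older's inequality with conjugate exponents $p$ and $p'$ yield
\begin{equation*}
\laz v\raz_Q \laz v^{-1}\raz_Q \leq \big(\laz \sigma\raz_Q \laz \sigma^{1-p'}\raz_Q^{p-1}\big)^{1/p} \big(\laz \lambda\raz_Q \laz \lambda^{1-p'}\raz_Q^{p-1}\big)^{1/p} \leq ([\sigma]_{A_p}[\lambda]_{A_p})^{1/p}.
\end{equation*}

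Next I would carry out a closure reduction. By Theorem \ref{t-hlw}, $b\mapsto [b,T]$ is a bounded linear map from ${\rm BMO}_v(\mathbb{R}^d)$ into the bounded operators from $L^p(\sigma)$ to $L^p(\lambda)$; since compact operators form a closed subspace under the operator norm, the set of $b\in{\rm BMO}_v$ for which $[b,T]:L^p(\sigma)\to L^p(\lambda)$ is compact is itself closed in ${\rm BMO}_v$. It therefore suffices to prove the compactness of $[b,T]$ for an arbitrary $b\in{\rm BMO}_{v^\eta}(\mathbb{R}^d)\cap{\rm CMO}(\mathbb{R}^d)$ with a fixed $\eta\in(1,r_{\sigma,\lambda}]$.

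The quantitative heart of the argument is the following: since $\eta\leq\tilde{r}_{p,\sigma}=\min(r_\sigma,r_{\sigma^{1-p'}})$, Theorem \ref{t-p}(a) combined with Jensen's inequality gives $\laz\sigma^\eta\raz_Q\leq 2^\eta\laz\sigma\raz_Q^\eta$ and $\laz\sigma^{\eta(1-p')}\raz_Q\leq 2^\eta\laz\sigma^{1-p'}\raz_Q^\eta$ uniformly in $Q$; multiplying these bounds in the $A_p$ definition produces $[\sigma^\eta]_{A_p}\leq 2^{\eta p}[\sigma]_{A_p}^\eta<\infty$, so $\sigma^\eta\in A_p$. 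The identical argument applied via $\eta\leq\tilde{r}_{p,\lambda}$ yields $\lambda^\eta\in A_p$.

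Finally, set $\theta=1-1/\eta\in(0,1)$. Applying Theorem \ref{t-sw} with $t=q=p$, $w_0=\sigma^\eta$, $w_1\equiv 1$ identifies $L^p(\sigma)=[L^p(\sigma^\eta),L^p(\mathbb{R}^d)]_\theta$, and analogously $L^p(\lambda)=[L^p(\lambda^\eta),L^p(\mathbb{R}^d)]_\theta$. Since $v^\eta=(\sigma^\eta)^{1/p}(\lambda^\eta)^{-1/p}$ is precisely the Bloom weight for the pair $(\sigma^\eta,\lambda^\eta)$, Theorem \ref{t-hlw} applied with these weights together with $b\in{\rm BMO}_{v^\eta}$ yields the boundedness $[b,T]:L^p(\sigma^\eta)\to L^p(\lambda^\eta)$, while Theorem \ref{t-u} together with $b\in{\rm CMO}$ gives the compactness of $[b,T]$ on $L^p(\mathbb{R}^d)$. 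Theorem \ref{t-ck}, applied with $X_0=L^p(\sigma^\eta)$, $Y_0=L^p(\lambda^\eta)$, $X_1=Y_1=L^p(\mathbb{R}^d)$ under side condition $(d)$ (all four spaces being complexified Banach lattices on $\mathbb{R}^d$), then delivers the compactness of $[b,T]:L^p(\sigma)\to L^p(\lambda)$, completing the proof. The main obstacle is the third paragraph: one must pin down the precise quantitative range $(1,r_{\sigma,\lambda}]$ so that the reverse H\"older inequality simultaneously handles both $\sigma$ and $\sigma^{1-p'}$ (and similarly $\lambda$ and $\lambda^{1-p'}$); once this is secured, the rest is a clean alignment of the known two-weight boundedness and unweighted compactness with Cwikel--Kalton interpolation.
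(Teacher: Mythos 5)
Your proposal is correct and follows essentially the same route as the paper: the closure reduction via Theorem \ref{t-hlw}, the use of Theorem \ref{t-p} to get $\sigma^{\eta},\lambda^{\eta}\in A_p$, the identification $L^p(\sigma)=[L^p(\sigma^{\eta}),L^p(\mathbb{R}^d)]_{\theta}$ with $\theta=1-1/\eta$, and the application of Theorem \ref{t-ck} with $X_0=L^p(\sigma^{\eta})$, $Y_0=L^p(\lambda^{\eta})$, $X_1=Y_1=L^p(\mathbb{R}^d)$ are exactly the paper's steps. The only differences are that you fill in some details the paper leaves implicit (the verification that $v\in A_2$, the explicit reverse H\"older computation giving $[\sigma^{\eta}]_{A_p}\leq 2^{\eta p}[\sigma]_{A_p}^{\eta}$, and the check via Theorem \ref{t-sw} that the interpolation spaces are the right ones), which is welcome but not a change of method.
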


Before giving our proof, we discuss the relationship between ${\rm BMO}_{v^{\eta}}(\mathbb{R}^{d})\cap {\rm CMO}(\mathbb{R}^{d})$ and ${\rm BMO}_{v}(\mathbb{R}^{d})$. Indeed, inspired by the work of Lerner, Ombrosi and Rivera-R\'{\i}os \cite[Lemma 4.8]{LOR}, we have the following result.

\begin{lem}\label{l-lor}
Let $v\in A_2$ and $r_v=1+\frac{1}{2^{5+d}[v]_{A_2}}$. Then for any $\eta\in (1,r_v]$, $${\rm BMO}_{v^{\eta}}(\mathbb{R}^{d})\cap {\rm CMO}(\mathbb{R}^{d})\subset {\rm CMO}_v(\mathbb{R}^d).$$
\end{lem}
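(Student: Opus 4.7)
The plan is to deduce the three defining conditions $(a)$, $(b)$, $(c)$ of $b \in {\rm CMO}_v(\mathbb{R}^d)$ by interpolating, cube by cube, between the \emph{unweighted} mean oscillation and the $v^\eta$-weighted one. Write $\omega(Q) := |Q|^{-1}\int_Q |b-\laz b\raz_Q|\,dx$; by Uchiyama's equivalent characterization of ${\rm CMO}(\mathbb{R}^d)$ recalled before the statement, the hypothesis $b \in {\rm CMO}(\mathbb{R}^d)$ is exactly the statement that $\omega(Q)$ vanishes in each of the three asymptotic regimes (side length $\to 0$, side length $\to \infty$, and $Q$ escaping to infinity). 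From $b \in {\rm BMO}_{v^\eta}(\mathbb{R}^d)$ we also have the trivial bound $\int_Q|b-\laz b\raz_Q|\,dx \leq \|b\|_{{\rm BMO}_{v^\eta}} v^\eta(Q)$. All the work lies in combining these two bounds with a sharp reverse H\"older control of $v^\eta(Q)$ by $v(Q)$ and $|Q|$.

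The quantitative input is Theorem \ref{t-p}$(a)$ applied to $v \in A_2$: the choice $r_v = 1 + 1/(2^{5+d}[v]_{A_2})$ yields $\laz v^{r_v}\raz_Q^{1/r_v} \leq 2\laz v\raz_Q$ for every cube $Q$, and Jensen's inequality with the convex map $t \mapsto t^{r_v/\eta}$ upgrades this, for any $\eta \in (1,r_v]$, to $\laz v^\eta\raz_Q^{1/\eta} \leq 2\laz v\raz_Q$, equivalently
\begin{equation*}
v^\eta(Q)^{1/\eta} \leq 2\,|Q|^{(1-\eta)/\eta}\,v(Q).
\end{equation*}
With this in hand, I would simply write $A = A^{1-1/\eta}A^{1/\eta}$ with $A = \int_Q |b-\laz b\raz_Q|\,dx$, bound the first factor by $(|Q|\omega(Q))^{1-1/\eta}$ and the second by $(\|b\|_{{\rm BMO}_{v^\eta}} v^\eta(Q))^{1/\eta}$, and substitute the reverse H\"older estimate. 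The powers of $|Q|$ and $v(Q)$ then cancel exactly, delivering the pointwise-in-$Q$ inequality
\begin{equation*}
\frac{1}{v(Q)}\int_Q |b-\laz b\raz_Q|\,dx \leq 2\, \|b\|_{{\rm BMO}_{v^\eta}}^{1/\eta}\, \omega(Q)^{1-1/\eta}.
\end{equation*}

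Once this uniform estimate is established, the conclusion is automatic: the factor $\|b\|_{{\rm BMO}_{v^\eta}}^{1/\eta}$ is a fixed finite constant by hypothesis, the supremum of $\omega(Q)$ is bounded by $\|b\|_{{\rm BMO}(\mathbb{R}^d)} < \infty$ (so in particular $b \in {\rm BMO}_v(\mathbb{R}^d)$, as required for membership in ${\rm CMO}_v$), and $\omega(Q)^{1-1/\eta} \to 0$ in the same three regimes as $\omega(Q)$ since $1 - 1/\eta > 0$; taking the supremum over cubes of fixed side length, or over cubes disjoint from $Q(0,a)$, commutes with the bound, so conditions $(a)$, $(b)$, $(c)$ for ${\rm CMO}_v(\mathbb{R}^d)$ each follow from the corresponding condition for ${\rm CMO}(\mathbb{R}^d)$. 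The only real design choice in the argument is the Hölder exponent $1/\eta$ in the splitting, which is precisely tuned to cancel the $|Q|$-powers coming from reverse H\"older, and the upper restriction $\eta \leq r_v$ is exactly what is needed to invoke the sharp reverse H\"older inequality with constant $2$; no further obstacle arises.
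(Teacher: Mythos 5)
Your argument is correct and is essentially the paper's own proof: both rest on the sharp reverse H\"older inequality (Theorem \ref{t-p}$(a)$ with $p=2$) to get $\laz v^{\eta}\raz_Q^{1/\eta}\approx \laz v\raz_Q$, followed by the same H\"older splitting of $\int_Q|b-\laz b\raz_Q|$ into exponents $1/\eta$ and $1/\eta'$, yielding $v(Q)^{-1}\int_Q|b-\laz b\raz_Q|\lesssim \|b\|_{{\rm BMO}_{v^{\eta}}}^{1/\eta}\,\omega(Q)^{1/\eta'}$ and hence conditions $(a)$--$(c)$ of ${\rm CMO}_v$ from Uchiyama's characterization of ${\rm CMO}$. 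Your write-up is, if anything, slightly more explicit about the constant and about checking membership in ${\rm BMO}_v$; no gaps.
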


\begin{proof}
From Theorem \ref{t-p} and the H\"{o}lder inequality, it readily follows that when $\eta\in (1,r_v]$, for any cube $Q\subset \mathbb{R}^d$,
$$\frac{1}{2}\laz v^{\eta}\raz_Q^{\frac{1}{\eta}}\leq \laz v\raz_Q\leq \laz v^{\eta}\raz_Q^{\frac{1}{\eta}},$$
which further implies that
\begin{align*}
\frac{1}{v(Q)} \int_Q |b(x)-\laz b\raz_Q|dx &\approx \frac{1}{v^{\eta}(Q)^{\frac{1}{\eta}} |Q|^{\frac{1}{\eta'}}}\int_Q |b(x)-\laz b\raz_Q|dx\\
&\approx \bigg(\frac{1}{v^{\eta}(Q)}\int_Q |b(x)-\laz b\raz_Q|dx\bigg)^{\frac{1}{\eta}} \bigg(\frac{1}{|Q|}\int_Q |b(x)-\laz b\raz_Q|dx\bigg)^{\frac{1}{\eta'}}\\
&\lesssim \|b\|^{\frac{1}{\eta}}_{{\rm BMO}_{v^{\eta}}(\mathbb{R}^d)} \bigg(\frac{1}{|Q|}\int_Q |b(x)-\laz b\raz_Q|dx\bigg)^{\frac{1}{\eta'}}.
\end{align*}
Then from this and the definition of ${\rm CMO}_v(\mathbb{R}^{d})$, we conclude that
\begin{align*}
{\rm BMO}_{v^{\eta}}(\mathbb{R}^{d})\cap {\rm CMO}(\mathbb{R}^{d}) \subset {\rm CMO}_v(\mathbb{R}^{d}).
\end{align*}
This completes the proof.
\end{proof}
Similarly, we conclude that if $v\in A_{\infty}$, let $r_v=1+\frac{1}{2^{d+11}[w]_{A_{\infty}}}$, then for any $\eta\in (1,r_v)$, $v^{\eta}\in A_{\infty}$ and
$${\rm BMO}_{v^{\eta}}(\mathbb{R}^{d})\cap {\rm CMO}(\mathbb{R}^{d})\subset {\rm CMO}_v(\mathbb{R}^d).$$

\begin{proof}[Proof of Corollary \ref{c-ll}]
First, by Theorem \ref{t-hlw}, we have that $$\|[b_0,T]-[b_1,T]\|_{L^p(\sigma)\to L^p(\lambda)}=\|[b_0-b_1,T]\|_{L^p(\sigma)\to L^p(\lambda)}\lesssim \|b_0-b_1\|_{{\rm BMO}_v(\mathbb{R}^d)},$$ provided that $b_0,b_1\in {\rm BMO}_v(\mathbb{R}^{d})$. Therefore it suffices to show the two-weight compactness of the commutator $[b,T]$ with $b\in {\rm BMO}_{v^{\eta}}(\mathbb{R}^{d})\cap {\rm CMO}(\mathbb{R}^{d})$ for $\eta\in (1,r_{\sigma,\lambda}]$.

From $b\in {\rm CMO}(\mathbb{R}^{d})$ and Theorem \ref{t-u}, the commutator $[b, T]$ is compact on $L^p(\mathbb{R}^d)$. Moreover, for $\sigma, \lambda \in A_p$, by Theorem \ref{t-p}, it follows that $\sigma^{\eta},\lambda^{\eta}\in A_p$ for $\eta\in (1,r_{\sigma,\lambda}]$. Then from $b\in {\rm BMO}_{v^\eta}(\mathbb{R}^d)$ and Theorem \ref{t-hlw}, $[b, T]$ is bounded from $L^p(\sigma^{\eta})$ to $L^p(\lambda^{\eta})$. Thus, from Theorem \ref{t-ck}, we obtain that $[b, T]$ is compact from $L^{p}(\sigma)$ to $L^{p}(\lambda)$ by taking $X_0=L^p(\sigma^{\eta})$, $Y_0=L^p(\lambda^{\eta})$, $X_1=Y_1=L^p(\mathbb{R}^d)$ and $\theta=1-1/\eta$. This completes the proof.
\end{proof}

\begin{remark}\label{r-ll}
Since our approach relies on the interpolation of compactness, the assumption of $b$ in Corollary \ref{c-ll} seems to be slightly stronger than $b\in {\rm CMO}_v(\mathbb{R}^{d})$ in \cite[Theorem 1.2]{LL} by Lemma $\ref{l-lor}$. However, when $d=1$, for any $v\in A_2$ and $\eta\in (1,r_v]$, we claim that
\begin{equation}\label{eq-lor}
\overline{{\rm BMO}_{v^{\eta}}(\mathbb{R})\cap {\rm CMO}(\mathbb{R})}^{{\rm BMO}_{v}(\mathbb{R})}={\rm CMO}_v(\mathbb{R}).
\end{equation}
To see this, note that by Theorem \ref{t-p}, we have $v^{\eta}\in A_2$. Then from \cite[Theorem 4.1]{LL}, we obtain that $$C^{\infty}_c(\mathbb{R})\subset {\rm BMO}_{v^{\eta}}(\mathbb{R}),\ \ \overline{C^{\infty}_c(\mathbb{R})}^{{\rm BMO}_{v}(\mathbb{R})}={\rm CMO}_v(\mathbb{R}),$$
from which \eqref{eq-lor} follows. But it is still unknown whether \eqref{eq-lor} holds when $d>1$, so recovering \cite[Theorem 1.2]{LL} by our approach might deserve further study.
\end{remark} \vspace{0.3cm}

\noindent {\bf 4.B. Commutators of fractional integral operators.} In our second application, we study the commutators of fractional integral operators. Recall the classical fractional integral operator (or Riesz potential): given $\alpha\in (0,d)$, for a Schwartz function $f\in \mathcal{S}(\mathbb{R}^d)$, define the fractional integral operator $I_{\alpha}$ by $$I_{\alpha}(f)(x):=\int_{\mathbb{R}^d}\frac{f(y)}{|x-y|^{d-\alpha}}dy.$$

The two-weight norm inequalities of Bloom type for commutators of fractional integrals have been considered in many literatures. For instance, Ding and Lu \cite[Theorem 1]{DL} established the two-weight boundedness of commutators of fractional integrals $T_{\Omega,\alpha}$ with rough kernels $\Omega\in L^{\infty}(S^{d-1})$. However, to obtain our two-weight compactness result, we need a refined upper bounded estimate established recently by Holmes, Rahm and Spencer \cite[Theorem 1.1]{HRS} as follows:
\begin{thm}[\cite{HRS}]\label{t-hrs}
Let $\alpha\in (0,d)$, $p,q\in (1,\infty)$ with $\frac{1}{p}=\frac{1}{q}+\frac{\alpha}{d}$ and $\sigma,\lambda\in A_{p,q}$. Suppose $b \in {\rm BMO}_{v}(\mathbb{R}^{d})$ with $v=\sigma\lambda^{-1}$. Then $$\|[b,I_{\alpha}]\|_{L^p(\sigma^p)\to L^q(\lambda^q)}\approx \|b\|_{{\rm BMO}_v(\mathbb{R}^d)}.$$
\end{thm}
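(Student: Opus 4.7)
My plan is to split the equivalence $\|[b,I_\alpha]\|_{L^p(\sigma^p)\to L^q(\lambda^q)}\approx\|b\|_{{\rm BMO}_v(\mathbb{R}^d)}$ into the upper estimate, which I would handle via pointwise sparse domination, and the lower estimate, which I would handle by a cube-by-cube testing argument. This mirrors the treatment of the non-fractional Bloom inequality in Theorem \ref{t-hlw} by Holmes, Lacey and Wick.

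For the upper bound I would first establish a pointwise fractional sparse bound in the spirit of Lerner--Ombrosi--Rivera-R\'ios: for each $f$, there is a sparse family $\mathcal{S}$ such that
$$|[b,I_\alpha]f(x)|\ls \sum_{Q\in \mathcal{S}}\bigl(|b(x)-\laz b\raz_Q|\laz |f|\raz_Q+\laz|b-\laz b\raz_Q||f|\raz_Q\bigr)|Q|^{\alpha/d}\chi_Q(x).$$
This bound would be proved via a local-mean-oscillation/Calder\'on--Zygmund stopping construction, using the pointwise kernel estimate $|x-y|^{\alpha-d}$ for $I_\alpha$ in place of a standard Calder\'on--Zygmund kernel. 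By duality with a test function $g\in L^{q'}(\lambda^q)$ and a principal-cube iteration with respect to the $v$-measure, each bilinear sparse term is dominated by $\|b\|_{{\rm BMO}_v}$ times a fractional sparse form
$$\sum_{Q\in \mathcal{S}}\laz v\raz_Q \laz g\raz^v_Q\,\laz |f|\raz_Q |Q|^{1+\alpha/d},$$
where $\laz g\raz^v_Q$ is a suitable $v$-weighted average. The identity $v=\sigma\lambda^{-1}$, together with the $A_{p,q}$ hypothesis on $\sigma$ and $\lambda$ and H\"older's inequality with exponents $(p,q')$, then produces the claimed norm estimate.

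For the lower bound I would exploit the positivity of the kernel of $I_\alpha$: on a cube $Q$ one has $I_\alpha(\chi_Q)\gs |Q|^{\alpha/d}\chi_Q$ on a suitable translate of $Q$, so pairing $[b,I_\alpha]$ against appropriately normalized dual test functions supported in a neighborhood of $Q$ produces the oscillation $|Q|^{-1}\int_Q |b-\laz b\raz_Q|$. After normalizing through the $A_{p,q}$ constants of $\sigma,\lambda$ and invoking $v=\sigma\lambda^{-1}$, this average becomes $v(Q)^{-1}\int_Q |b-\laz b\raz_Q|$, which upon taking the supremum over $Q$ gives the BMO$_v$ norm.

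The main obstacle I anticipate is the principal-cube step in the upper bound: one must arrange the stopping-time argument so that the Carleson packing constants remain uniform in the fractional scaling $|Q|^{\alpha/d}$ and verify that the BMO$_v$-oscillations combine with the $A_{p,q}$-norms of $\sigma$ and $\lambda$ without residual loss coming from the differing weights $\sigma^p$ and $\lambda^q$ on the two sides of the inequality.
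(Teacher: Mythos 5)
The paper does not prove Theorem \ref{t-hrs} at all: it is imported verbatim from Holmes, Rahm and Spencer \cite{HRS} and used as a black box in the proof of Corollary \ref{c-new}, so there is no internal argument to compare yours against. Measured against the source, your route is genuinely different. The proof in \cite{HRS} follows the Holmes--Lacey--Wick dyadic template: $I_\alpha$ is reduced to dyadic fractional averaging operators, the commutator is split into paraproduct-type pieces via martingale differences, and each paraproduct is bounded between the two weighted spaces using the ${\rm BMO}_v$ hypothesis. You instead propose pointwise sparse domination of $[b,I_\alpha]$ in the style of Lerner--Ombrosi--Rivera-R\'{\i}os followed by a Bloom-type estimate of the resulting bilinear sparse forms. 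Both are viable; the sparse route is shorter and yields quantitative dependence on $[\sigma]_{A_{p,q}}$ and $[\lambda]_{A_{p,q}}$, while the paraproduct route is what is actually carried out in the cited reference.

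As a standalone proof, however, your outline defers all of the substance. First, the passage from the oscillation terms $\laz |b-\laz b\raz_Q||f|\raz_Q$ and $|b(x)-\laz b\raz_Q|\laz|f|\raz_Q$ to $\|b\|_{{\rm BMO}_v(\mathbb{R}^d)}$ times the form $\sum_{Q}\laz v\raz_Q\laz|f|\raz_Q\laz|g|\raz_Q|Q|^{1+\alpha/d}$ is exactly where the theorem lives; the principal-cube/Carleson-embedding argument with respect to $v$ must be written out, and you yourself flag it as the anticipated obstacle without resolving it. Second, the boundedness of that fractional sparse form is itself a two-weight statement that has to be verified from $\sigma,\lambda\in A_{p,q}$ and $v=\sigma\lambda^{-1}$; note also the duality slip: the dual of $L^q(\lambda^q)$ under the unweighted pairing is $L^{q'}(\lambda^{-q'})$, not $L^{q'}(\lambda^q)$. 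Third, in the lower bound the quantity you actually control is $b(x)-b(y)$ with $x$ in a translate of $Q$ and $y\in Q$, not $b(y)-\laz b\raz_Q$; recovering the oscillation requires the median/sign-function choice of test functions and a two-cube argument, which you assert rather than perform. None of these steps points in a wrong direction, but as written the proposal is a plan, not a proof.
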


For the application of Theorem \ref{t-ck}, we need the following unweighted compactness result of Wang \cite{W}.
\begin{thm}[\cite{W}]\label{t-w}
Let $\alpha\in (0,d)$, $p,q\in (1,\infty)$ with $\frac{1}{p}=\frac{1}{q}+\frac{\alpha}{d}$. Suppose $b\in {\rm CMO}(\mathbb{R}^d)$, then $[b,I_{\alpha}]:L^p(\mathbb{R}^d)\to L^q(\mathbb{R}^d)$ is a compact operator.
\end{thm}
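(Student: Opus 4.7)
The plan is to reduce to the case $b \in C_c^\infty(\mathbb{R}^d)$ via a density plus operator-norm approximation, and then prove compactness for such $b$ by a kernel-truncation argument. The unweighted case $\sigma = \lambda \equiv 1$ of Theorem \ref{t-hrs} (for which $v \equiv 1$) gives $\|[b, I_\alpha]\|_{L^p(\mathbb{R}^d) \to L^q(\mathbb{R}^d)} \lesssim \|b\|_{{\rm BMO}(\mathbb{R}^d)}$. Since ${\rm CMO}(\mathbb{R}^d)$ is by definition the BMO-closure of $C_c^\infty(\mathbb{R}^d)$, every $b \in {\rm CMO}$ is the BMO-limit of some sequence $b_n \in C_c^\infty$, and the boundedness estimate promotes this to $[b_n, I_\alpha] \to [b, I_\alpha]$ in the $L^p \to L^q$ operator norm. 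Since compact operators are closed under operator-norm limits, it suffices to prove the theorem when $b \in C_c^\infty(\mathbb{R}^d)$.

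For such $b$ supported in $B(0, R_0)$, I would truncate the commutator kernel $K(x, y) := (b(x) - b(y))|x - y|^{\alpha - d}$ to the annular region $\{\epsilon \leq |x - y| \leq N\}$ and let $T_{\epsilon, N}$ denote the resulting integral operator. Its kernel is bounded and supported in the bounded set $B(0, R_0 + N)^2$ (the support of $b$ localizes one of the variables), so by approximating the kernel uniformly by simple functions one obtains finite-rank approximations in the $L^p \to L^q$ operator norm; hence $T_{\epsilon, N}$ is compact. It then remains to show $T_{\epsilon, N} \to [b, I_\alpha]$ in $L^p \to L^q$ operator norm as $\epsilon \to 0^+$ and $N \to \infty$.

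For the near-diagonal error (kernel on $\{|x - y| < \epsilon\}$), the Lipschitz bound $|b(x) - b(y)| \leq \|\nabla b\|_\infty |x - y|$ majorizes the commutator kernel pointwise by the convolution kernel $\|\nabla b\|_\infty |x - y|^{\alpha + 1 - d} \chi_{|x - y| < \epsilon}$. Young's convolution inequality with exponent $r = d/(d - \alpha)$ (chosen so that $1 + 1/q = 1/r + 1/p$) then yields an operator-norm bound $\lesssim \|\nabla b\|_\infty \epsilon$, which tends to $0$. For the far-field error (kernel on $\{|x - y| > N\}$), the compact support of $b$ forces either $x$ or $y$ into $B(0, R_0)$, so that $|x - y|$ dominates $|x|$ or $|y|$ outside a large ball; combining the pointwise estimate $|K(x, y)| \leq 2\|b\|_\infty |x - y|^{\alpha - d}$ with H\"older's inequality against $f \in L^p$, and using $1/p > \alpha/d$ (equivalent to $q < \infty$), one obtains an $L^p \to L^q$ bound of order $N^{-c}$ for some $c > 0$.

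The main obstacle is juggling the exponents so that these two error estimates close under the target relation $1/p - 1/q = \alpha/d$. In particular, in the near-diagonal piece one must verify $r = d/(d - \alpha) < d/(d - \alpha - 1)$ in the regime $d > \alpha + 1$ so that the $L^r$-norm of the truncated Riesz kernel is finite (a short check that reduces to $\alpha > -1$), and in the far-field piece one must carefully separate the cases $x \in B(0, R_0)$ and $y \in B(0, R_0)$ because the majorization of $|x - y|$ by $|x|$ (respectively $|y|$) is only valid in one of them at a time. Apart from this exponent bookkeeping, both estimates are direct calculations, and together they complete the proof of compactness of $[b, I_\alpha]$ for $b \in C_c^\infty$, which by the first paragraph extends to all $b \in {\rm CMO}(\mathbb{R}^d)$.
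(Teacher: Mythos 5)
The paper does not actually prove Theorem \ref{t-w}: it is imported verbatim as a known result of Wang \cite{W} (the classical compactness of $[b,I_\alpha]$ for $b\in{\rm CMO}$), so there is no internal proof to compare against. Your argument is nevertheless correct and complete in outline. The reduction to $b\in C_c^\infty$ via the Chanillo-type bound $\|[b,I_\alpha]\|_{L^p\to L^q}\lesssim\|b\|_{\rm BMO}$ (the unweighted case of Theorem \ref{t-hrs}) and norm-closedness of compact operators is exactly the standard first step. Where you diverge from the classical proofs (Wang's, and Uchiyama's template for singular integrals, cf.\ Theorem \ref{t-u}) is in the second step: those arguments keep the full operator and verify total boundedness of the image of the unit ball via the Fr\'echet--Kolmogorov criterion (uniform bound, uniform $L^q$-continuity under translation, uniform decay at infinity), whereas you replace $[b,I_\alpha]$ by truncations $T_{\epsilon,N}$ whose kernels are bounded with compact support (hence compact, by uniform approximation with simple-function kernels) and prove operator-norm convergence. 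Your two error estimates check out: Young's inequality with $r=d/(d-\alpha)$ gives the near-diagonal bound $O(\epsilon)$ since $(\alpha+1-d)r+d=r>0$, and in the far field the relation $\frac1p-\frac1q=\frac\alpha d$ yields decay $N^{-d/p'}$ (resp.\ $N^{-d/q}$) in the two cases $y\in B(0,R_0)$, $x\in B(0,R_0)$, using $q>d/(d-\alpha)$ and $p'>d/(d-\alpha)$ respectively. The trade-off: the Fr\'echet--Kolmogorov route generalizes more readily to operators without a clean integral-kernel truncation, while your route is more self-contained and avoids any equicontinuity bookkeeping; for $I_\alpha$ specifically, both are equally viable.
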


A combination of Theorems \ref{t-hrs} and \ref{t-w}, together with Theorems \ref{t-ck} and \ref{t-p}, gives the following two-weight compactness of commutators of fractional integral operators which is a new result to our best knowledge.
\begin{cor}\label{c-new}
Let $\alpha\in (0,d)$, $p,q\in (1,\infty)$ with $\frac{1}{p}=\frac{1}{q}+\frac{\alpha}{d}$ and $\sigma,\lambda\in A_{p,q}$. Suppose $$b\in \overline{\bigcup_{\eta\in (1,r_{\sigma,\lambda}]}{\rm BMO}_{v^{\eta}}(\mathbb{R}^{d})\cap {\rm CMO}(\mathbb{R}^{d})}^{{\rm BMO}_{v}(\mathbb{R}^{d})},$$ where
\begin{align*}
v=\sigma\lambda^{-1}\in A_2,\ \ \ \ r_{\sigma,\lambda}&=\min(\tilde{r}_{1+p'/q,\sigma^{-p'}},\tilde{r}_{1+q/p',\lambda^q})\\
&=1+\frac{1}{2^{2\max(p'/q,q/p')+d+3}\max\big([\sigma]^{\max(p',q)}_{A_{p,q}},[\lambda]^{\max(p',q)}_{A_{p,q}}\big)}.
\end{align*}
Then the commutator $[b, I_{\alpha}]$ is compact from $L^{p}(\sigma^p)$ to $L^{q}(\lambda^q)$.
\end{cor}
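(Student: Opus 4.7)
\textbf{Proof proposal for Corollary \ref{c-new}.}
The strategy parallels the proof of Corollary \ref{c-ll}. The aim is to fix $\sigma,\lambda\in A_{p,q}$ and $b$ in the displayed closure, and show $[b,I_\alpha]:L^p(\sigma^p)\to L^q(\lambda^q)$ is compact. First I would carry out a density reduction: by Theorem \ref{t-hrs} the assignment $b\mapsto[b,I_\alpha]$ is a bounded linear map from ${\rm BMO}_v(\mathbb{R}^d)$ into $\mathcal{B}(L^p(\sigma^p),L^q(\lambda^q))$, and since the compact operators form a closed subspace of $\mathcal{B}(L^p(\sigma^p),L^q(\lambda^q))$, it suffices to prove compactness under the stronger assumption
\[
b\in {\rm BMO}_{v^{\eta}}(\mathbb{R}^{d})\cap {\rm CMO}(\mathbb{R}^{d}) \quad \text{for some } \eta\in (1,r_{\sigma,\lambda}].
\]

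Having fixed such $b$ and $\eta$, I would assemble the two endpoint inputs required by Theorem \ref{t-ck}. On the unweighted side, $b\in{\rm CMO}(\mathbb{R}^d)$ together with Theorem \ref{t-w} gives that $[b,I_\alpha]:L^p(\mathbb{R}^d)\to L^q(\mathbb{R}^d)$ is compact. On the weighted side, I would use Theorem \ref{t-p} and the definition of $r_{\sigma,\lambda}$ to verify that $\sigma^{\eta},\lambda^{\eta}\in A_{p,q}$; concretely, since $\sigma\in A_{p,q}$ is equivalent to $\sigma^q\in A_{1+q/p'}$, and since $(\sigma^q)^{1-(1+q/p')'}=\sigma^{-p'}$, the condition $\eta\le \tilde r_{1+p'/q,\sigma^{-p'}}=\min(r_{\sigma^q},r_{\sigma^{-p'}})$ allows Theorem \ref{t-p}(a) to be applied to both $\sigma^q$ and $\sigma^{-p'}$ to conclude $\sigma^{\eta q}\in A_{1+q/p'}$, i.e.\ $\sigma^\eta\in A_{p,q}$ (and likewise for $\lambda$). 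Then Theorem \ref{t-hrs} applied with $b\in{\rm BMO}_{v^\eta}$ yields that $[b,I_\alpha]:L^p(\sigma^{p\eta})\to L^q(\lambda^{q\eta})$ is bounded.

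Finally I would invoke Theorem \ref{t-sw} with $\theta=1-1/\eta$ to write
\[
L^p(\sigma^p)=[L^p(\sigma^{p\eta}),L^p(\mathbb{R}^d)]_{1-1/\eta},\qquad L^q(\lambda^q)=[L^q(\lambda^{q\eta}),L^q(\mathbb{R}^d)]_{1-1/\eta},
\]
the weight identities $\sigma=\sigma^{\eta(1-\theta)}$ and $\lambda=\lambda^{\eta(1-\theta)}$ determining $\theta=1-1/\eta$. Applying Theorem \ref{t-ck} with $X_0=L^p(\sigma^{p\eta})$, $Y_0=L^q(\lambda^{q\eta})$, $X_1=L^p(\mathbb{R}^d)$, $Y_1=L^q(\mathbb{R}^d)$ (the side condition holds since the spaces are weighted Lebesgue lattices), the compactness at $\theta$ transfers to the desired compactness from $L^p(\sigma^p)$ to $L^q(\lambda^q)$.

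The main obstacle I anticipate is the self-improvement step verifying $\sigma^\eta,\lambda^\eta\in A_{p,q}$, which requires checking the equivalence $\tilde r_{1+q/p',\lambda^q}=\min(r_{\lambda^q},r_{\lambda^{-p'}})$ (and its $\sigma$-analogue) and then unwinding Theorem \ref{t-p} to bound $[\sigma^\eta]_{A_{p,q}}$ and $[\lambda^\eta]_{A_{p,q}}$ in terms of $[\sigma]_{A_{p,q}},[\lambda]_{A_{p,q}}$, which is precisely what the explicit value of $r_{\sigma,\lambda}$ in the statement encodes. The claim $v\in A_2$ is routine: from $\sigma^q,\lambda^{-p'}\in A_\infty$ (and the symmetric pair) one verifies $\sup_Q\langle v\rangle_Q\langle v^{-1}\rangle_Q<\infty$, so that ${\rm BMO}_{v}$ is a meaningful Bloom space and the density reduction at the start is justified.
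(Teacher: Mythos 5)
Your proposal is correct and follows essentially the same route as the paper's own proof: the density reduction via Theorem \ref{t-hrs}, the unweighted compactness from Theorem \ref{t-w}, the self-improvement $\sigma^{\eta},\lambda^{\eta}\in A_{p,q}$ via Theorem \ref{t-p} applied through the equivalence $w\in A_{p,q}\iff w^{-p'}\in A_{1+p'/q}\iff w^{q}\in A_{1+q/p'}$, and the application of Theorem \ref{t-ck} with exactly the choices $X_0=L^p(\sigma^{p\eta})$, $Y_0=L^q(\lambda^{q\eta})$, $X_1=L^p(\mathbb{R}^d)$, $Y_1=L^q(\mathbb{R}^d)$ and $\theta=1-1/\eta$. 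The only cosmetic difference is that you make the Stein--Weiss identification of the interpolation spaces explicit, which the paper leaves implicit.
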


\begin{proof}
First, by Theorem \ref{t-hrs}, we have that $$\|[b_0,I_{\alpha}]-[b_1,I_{\alpha}]\|_{L^p(\sigma^p)\to L^q(\lambda^q)}=\|[b_0-b_1,I_{\alpha}]\|_{L^p(\sigma^p)\to L^q(\lambda^q)}\approx \|b_0-b_1\|_{{\rm BMO}_v(\mathbb{R}^d)},$$ provided that $b_0,b_1\in {\rm BMO}_v(\mathbb{R}^{d})$. Therefore it suffices to show the two-weight compactness of the commutator $[b,I_{\alpha}]$ with $b\in {\rm BMO}_{v^{\eta}}(\mathbb{R}^{d})\cap {\rm CMO}(\mathbb{R}^{d})$ for $\eta\in (1,r_{\sigma,\lambda}]$.

From $b \in {\rm CMO}(\mathbb{R}^d)$ and Theorem \ref{t-w}, we deduce that the commutator $[b,I_{\alpha}]:L^p(\mathbb{R}^d)\to L^q(\mathbb{R}^d)$ is a compact operator. Observe that for any $w\in A_{p,q}$, $$[w]_{A_{p,q}}=[w^q]^{\frac{1}{q}}_{A_{1+q/p'}}=[w^{-p'}]^{\frac{1}{p'}}_{A_{1+p'/q}}.$$
It follows that for $\sigma,\lambda\in A_{p,q}$, we have $\sigma^{-p'}\in A_{1+p'/q}$ and $\lambda^q\in A_{1+q/p'}$. Then by Theorem \ref{t-p}, when $\eta\in (1,r_{\sigma,\lambda}]$, we see that $\sigma^{-p'\eta}\in A_{1+p'/q}$ and $\lambda^{q\eta}\in A_{1+q/p'}$. From this, we obtain $\sigma^{\eta},\lambda^{\eta}\in A_{p,q}$. Moreover, from $b \in {\rm BMO}_{v^{\eta}}(\mathbb{R}^{d})$ and Theorem \ref{t-hrs}, $[b,I_{\alpha}]$ is bounded from $L^p(\sigma^{p\eta})$ to $L^q(\lambda^{q\eta})$. Thus, from Theorem \ref{t-ck}, we obtain that $[b, I_{\alpha}]$ is compact from $L^{p}(\sigma^p)$ to $L^{q}(\lambda^q)$ by taking $X_0=L^p(\sigma^{p\eta})$, $Y_0=L^q(\lambda^{q\eta})$, $X_1=L^p(\mathbb{R}^d)$, $Y_1=L^q(\mathbb{R}^d)$ and $\theta=1-1/\eta$. This completes the proof.
\end{proof}

\section{Applications to the two-weight compactness of bilinear operators}\label{s-5}
In this section, we show that our approach applied to linear operators still works in the multilinear case. In the one-weight setting, this topic has been discussed carefully in \cite{HLb,COY,WX}. For simplicity, we only study the two-weight compactness of bilinear operators. To be precise, we consider the first order bilinear commutators of bilinear Calder\'{o}n--Zygmund operators defined as follows.

Let $\Delta :=\{(x,y_0,y_1)\in (\mathbb{R}^d)^3: x=y_0=y_1\}$. We say that $K: (\mathbb{R}^d)^3\backslash \Delta\to \mathbb{C}$ is a bilinear Calder\'{o}n--Zygmund kernel if $$|K(x,y_0,y_1)|\lesssim \frac{1}{(|x-y_0|+|x-y_1|)^{2d}},$$
and
\begin{align*}
&|K(x+h,y_0,y_1)-K(x,y_0,y_1)|+|K(x,y_0+h,y_1)-K(x,y_0,y_1)|\\
&+|K(x,y_0,y_1+h)-K(x,y_0,y_1)|\lesssim \frac{|h|^{\delta}}{(|x-y_0|+|x-y_1|)^{2d+\delta}},
\end{align*}
for all $\max(|x-y_0|,|x-y_1|)\geq 2|h|$ and some fixed $\delta\in (0,1]$. Let $T$ be a bilinear operator initially defined from $(\mathcal{S}(\mathbb{R}^d))^2$ to $\mathcal{S}'(\mathbb{R}^d)$. Then, we say that $T$ is a bilinear Calder\'{o}n--Zygmund operator if it extends to be bounded from $L^{p_0}(\mathbb{R}^d)\times L^{p_1}(\mathbb{R}^d)$ to $L^{p}(\mathbb{R}^d)$ for some $p_0,p_1\in (1,\infty)$, where $\frac{1}{p}=\frac{1}{p_0}+\frac{1}{p_1}$, and there exists a bilinear Calder\'{o}n--Zygmund kernel $K$ such that for all $f_0,f_1 \in C^{\infty}_c(\mathbb{R}^d)$, $$T(f_0,f_1)(x):=\int_{(\mathbb{R}^d)^2}K(x,y_0,y_1)f_0(y_0)f_1(y_1)dy_0dy_1,\ \ x\notin {\rm supp}(f_0) \cap {\rm supp}(f_1).$$  Given a bilinear Calder\'{o}n--Zygmund operator $T$, a pair of locally integrable functions ${\bf b}=(b_0,b_1)$ and $\mathcal{I}(\not= \emptyset)\subset \{0,1\}$, we define the first order bilinear commutators as follows:
\begin{equation}\label{eq-li}
[{\bf b},T]_{\mathcal{I}}(f_0,f_1)=
\begin{cases}
b_0T(f_0,f_1)-T(b_0f_0,f_1),& \mathcal{I}=\{0\};\\
b_1T(f_0,f_1)-T(f_0,b_1f_1),& \mathcal{I}=\{1\};\\
[{\bf b},[{\bf b},T]_{\{0\}}]_{\{1\}}(f_0,f_1)=[{\bf b},[{\bf b},T]_{\{1\}}]_{\{0\}}(f_0,f_1),& \mathcal{I}=\{0,1\}.
\end{cases}	
\end{equation}

In the bilinear case, we need a bilinear version of the interpolation theorem of compactness due to Cobos, Fern\'{a}ndez-Cabrera and Mart\'{\i}nez \cite[Theorem 3.2]{CFM}. Before this, we need to introduce some notations. For a Banach couple $\bar{X}=(X_0,X_1)$, we write $X^{\circ}_i$ for the closure of $X_0\cap X_1$ in the norm of $X_i$ for $i=0,1$. We denote by $\mathcal{B}(\bar{X}\times \bar{Y}, \bar{Z})$ the space of all bilinear operators $T$ which are defined from $(X_0\cap X_1)\times (Y_0\cap Y_1)$ to $Z_0\cap Z_1$ and satisfy $$\|T(f_0,f_1)\|_{Z_i}\lesssim \|f_0\|_{X_i}\|f_1\|_{Y_i},\ \ f_0\in X_0\cap X_1,\ f_1\in Y_0\cap Y_1,\ i=0,1,$$ where $\bar{Y}:=(Y_0,Y_1)$ and $\bar{Z}:=(Z_0,Z_1)$ are Banach couples.
\begin{thm}[\cite{CFM}]\label{t-cfm}
Let $\bar{X}=(X_0,X_1)$ and $\bar{Y}=(Y_0,Y_1)$ be Banach couples. Assume that $(\Omega, \mu)$ is a $\sigma$-finite measure space. Let $\bar{Z}=(Z_0,Z_1)$ be a couple of Banach function spaces on $\Omega$, $\theta\in (0,1)$ and $T\in \mathcal{B}(\bar{X}\times \bar{Y}, \bar{Z})$. If $T: X^{\circ}_0 \times Y^{\circ}_0 \to Z_0$ compactly and $Z_0$ has absolutely continuous norm, then $T$ can be uniquely extended to a compact bilinear operator from $[X_0,X_1]_{\theta}\times [Y_0,Y_1]_{\theta}$ to $[Z_0,Z_1]_{\theta}$.
\end{thm}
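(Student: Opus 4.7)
The plan is to adapt the Cwikel--Kalton approach for interpolation of compact linear operators to the bilinear setting, with the absolute continuity of the norm in $Z_0$ providing the key technical ingredient that replaces linear tools such as duality. Starting from a bounded sequence $(f_n,g_n)$ in $[X_0,X_1]_\theta\times[Y_0,Y_1]_\theta$, the goal is to extract a subsequence so that $T(f_n,g_n)$ is Cauchy in $[Z_0,Z_1]_\theta$. Via Calder\'on's representation of the complex interpolation functor, for each $n$ I would pick bounded holomorphic extensions $F_n\colon S\to X_0+X_1$ and $G_n\colon S\to Y_0+Y_1$ on the strip $S=\{z\in\mathbb{C}:0<\mathrm{Re}\,z<1\}$ with $F_n(\theta)=f_n$, $G_n(\theta)=g_n$, and boundary traces $F_n(it)\in X_0^\circ$, $F_n(1+it)\in X_1^\circ$, $G_n(it)\in Y_0^\circ$, $G_n(1+it)\in Y_1^\circ$, whose norms are controlled by the interpolation norms up to a factor $1+\varepsilon$.

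Next I would form the bilinear holomorphic curve $H_n(z):=T(F_n(z),G_n(z))$ valued in $Z_0+Z_1$ on $S$. Since $T\in\mathcal{B}(\bar{X}\times\bar{Y},\bar{Z})$, the boundary estimates $\|H_n(it)\|_{Z_0}\lesssim 1$ and $\|H_n(1+it)\|_{Z_1}\lesssim 1$ hold uniformly in $n$ and $t$. Exploiting compactness of $T\colon X_0^\circ\times Y_0^\circ\to Z_0$ on the left edge, along a countable dense set of $t\in\mathbb{Q}$ I would diagonalise in $n$ so that $T(F_n(it),G_n(it))$ converges in $Z_0$ at each chosen $t$. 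Holomorphy of $H_n$ on $S$ supplies an equicontinuity-in-$t$ bound via Cauchy's formula applied on small disks inside the strip, upgrading pointwise convergence to uniform-in-$t$ Cauchyness on any finite sub-interval, and hence a.e.\ on the whole line $\mathrm{Re}\,z=0$ after a further diagonalisation.

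Finally, to transfer Cauchyness from the edge $\mathrm{Re}\,z=0$ to the interior point $z=\theta$, I would invoke a three-lines estimate of the form
\[
\|H_n(\theta)-H_m(\theta)\|_{[Z_0,Z_1]_\theta}\lesssim \Bigl(\sup_t\|(H_n-H_m)(it)\|_{Z_0}\Bigr)^{1-\theta}\Bigl(\sup_t\|(H_n-H_m)(1+it)\|_{Z_1}\Bigr)^\theta,
\]
so that since the second factor is uniformly bounded it suffices to make the first factor small. Here the absolute continuity of the norm in $Z_0$ enters decisively: choosing a measurable exhaustion $\Omega_k\nearrow\Omega$ with $\mu(\Omega_k)<\infty$, the tails $\|H_n(it)\chi_{\Omega\setminus\Omega_k}\|_{Z_0}$ are uniformly small in $n$ and $t$ once $k$ is large, upgrading a.e.\ Cauchy behaviour on the line to a genuine uniform $Z_0$-Cauchy estimate. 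The main obstacle I anticipate is precisely this transfer step: bilinearity interferes with cut-offs, since the truncated output $H_n(z)\chi_{\Omega_k}$ is not of the form $T(\widetilde{F}_n(z),\widetilde{G}_n(z))$ for easily controlled arguments. I would circumvent this by operating directly on the holomorphic curve $H_n$ rather than re-composing through $T$: the truncated curve $z\mapsto H_n(z)\chi_{\Omega_k}$ is holomorphic into $Z_0\cap Z_1$ and inherits boundary estimates on both edges, so a further three-lines argument yields the required $[Z_0,Z_1]_\theta$-control on the tails and completes the extraction of a convergent subsequence.
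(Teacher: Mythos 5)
You should first be aware that the paper offers no proof of Theorem \ref{t-cfm}: it is imported verbatim from Cobos, Fern\'andez-Cabrera and Mart\'inez \cite{CFM}, so there is no internal argument to compare yours against, and your sketch has to be judged on its own. Judged that way, it has a genuine gap at its central step: the passage from convergence of $T(F_n(it),G_n(it))$ in $Z_0$ at countably many $t$ to uniform (or even almost everywhere) convergence on the line $\mathrm{Re}\,z=0$. Cauchy's formula on disks contained in the open strip controls derivatives of $H_n$ at \emph{interior} points, and only in the norm of $Z_0+Z_1$; it provides no modulus of continuity, uniform in $n$, for the boundary trace $t\mapsto H_n(it)$ in the $Z_0$-norm. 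The admissible extensions $F_n,G_n$ are controlled only through $\sup_t\|F_n(it)\|_{X_0}$ etc., and nothing prevents their boundary values from oscillating arbitrarily fast in $t$ (compare $t\mapsto\sin(nt)\,h$: values in a fixed compact set, yet no subsequence converges at a.e.\ $t$). Without equicontinuity, Arzel\`a--Ascoli is unavailable and pointwise convergence on a dense set of $t$ yields nothing at $z=\theta$. Your use of the absolute continuity of the $Z_0$-norm does not repair this: the exhaustion $\Omega_k\nearrow\Omega$ controls tails in the measure-space variable $\omega\in\Omega$, which is orthogonal to oscillation in the strip variable $t$, so ``a.e.\ Cauchy in $t$ plus uniformly small tails in $\Omega$ implies $\sup_t$-Cauchy'' is a non sequitur. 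A telling consistency check: if your argument worked, the hypothesis on $Z_0$ would be essentially decorative, and you would have proved one-sided interpolation of compactness for the complex method with no side condition --- a well-known open problem.

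Two further points. First, $H_n(z)=T(F_n(z),G_n(z))$ is not obviously an element of Calder\'on's space $\mathcal{F}(\bar Z)$, since $T$ is only defined on $(X_0\cap X_1)\times(Y_0\cap Y_1)$ and a bilinear map bounded on $X_0\times Y_0$ and on $X_1\times Y_1$ need not extend to $(X_0+X_1)\times(Y_0+Y_1)$ (cross terms such as $T(x_0,y_1)$ are uncontrolled); one must restrict to extensions in the dense subclass $\mathcal{F}_0$ with values in the intersections, and extend at the very end by density. This is standard but should be said. Second, the known proofs (Cwikel--Kalton for the linear side condition $(d)$, and \cite{CFM} in the bilinear case) use the order continuity of $Z_0$ in a structurally different way: relative compactness of $T(B_{X_0^\circ}\times B_{Y_0^\circ})$ in a Banach function space with absolutely continuous norm makes this image approximately order bounded, which permits approximating $T$ in the operator norm of $X_0^\circ\times Y_0^\circ\to Z_0$ by operators whose compactness at the interpolated level is directly verifiable, while the error remains bounded from $X_1\times Y_1$ to $Z_1$; the bilinear convexity inequality $\|S\|_{\theta}\le\|S\|_{0}^{1-\theta}\|S\|_{1}^{\theta}$ then makes the interpolated error small. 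In other words, the side condition enters at the level of approximating the operator, not through a boundary-value argument on the strip. If you wish to salvage your route, you would at least need to replace the $\sup_t$ three-lines bound by Calder\'on's Poisson-integral form and still produce a mechanism forcing a.e.-in-$t$ convergence; the stated hypotheses do not supply one.
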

For the definitions of the Banach function space and the absolutely continuous norm, see, for example, \cite{CFM,HLb}. It is known that the assumptions in Theorem \ref{t-cfm} hold for the weighted Lebesgue spaces, see \cite[Lemma 3.3]{HLb}, which is sufficient for our applications of Theorem \ref{t-cfm}.

We recall that a weight vector ${\bf w}=(w_0,w_1)$ is called a (multiple) $A_{\bf p}$ weight (write ${\bf w}\in A_{\bf p}$) if $$[{\bf w}]_{A_{\bf p}}:=\sup_{Q\subset \mathbb{R}^d}\laz w_{\bf p}\raz^{\frac{1}{p}}_Q \laz w^{1-p'_0}_0\raz^{\frac{1}{p'_0}}_Q \laz w^{1-p'_1}_1\raz^{\frac{1}{p'_1}}_Q< \infty,$$
where ${\bf p}=(p_0,p_1)\in (1,\infty)^2$, $\frac{1}{p}=\frac{1}{p_0}+\frac{1}{p_1}$
and $w_{\bf p}=w^{\frac{p}{p_0}}_0 w^{\frac{p}{p_1}}_1$. The following is a characterization of $A_{\bf p}$ weights which is quite useful in applications:
\begin{equation}\label{eq-m}
{\bf w}\in A_{\bf p} \iff w_{\bf p}\in A_{2p},\ w^{1-p'_i}_i\in A_{2p'_i},\ i=0,1.
\end{equation}

The two-weight boundedness of Bloom type of multilinear commutators of multilinear Calder\'{o}n--Zygmund operators were first studied by Kunwar and Ou in \cite{KO}. Their result was then improved by Li \cite{L} in terms of the genuinely multilinear weights. The following is a bilinear version of \cite[Theorem 3.5]{L}.
\begin{thm}[\cite{L}]\label{t-li}
Let $T$ be a bilinear Calder\'{o}n--Zygmund operator and $\mathcal{I}(\not= \emptyset)\subset \{0,1\}$. Let $[{\bf b}, T]_{\mathcal{I}}$ be defined as in \eqref{eq-li}, $p_0,p_1\in (1,\infty)$ and $\frac{1}{p}=\frac{1}{p_0}+\frac{1}{p_1}$. Suppose that $(w_0,w_1)\in A_{\bf p}$, where
$$w_i=\begin{cases}
\sigma_i,& i\notin \mathcal{I};\\
\sigma_i\ {\rm or}\ \lambda_i,& i\in \mathcal{I}.
\end{cases}$$
If $v_i:=\sigma^{\frac{1}{p_i}}_i\lambda^{-\frac{1}{p_i}}_i\in A_{\infty}$ and $b_i\in {\rm BMO}_{v_i}(\mathbb{R}^d)$ for each $i\in \mathcal{I}$, then
$$\|[{\bf b}, T]_{\mathcal{I}}\|_{L^{p_0}(\sigma_0)\times L^{p_1}(\sigma_1)\to L^p(\prod_{i\in \mathcal{I}}\lambda^{\frac{p}{p_i}}_i \prod_{j\notin \mathcal{I}}\sigma^{\frac{p}{p_j}}_j)}\lesssim \prod_{i\in \mathcal{I}}\|b_i\|_{{\rm BMO}_{v_i}(\mathbb{R}^d)}.$$
\end{thm}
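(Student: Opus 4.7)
The plan is to mimic the linear Bloom strategy of Holmes--Lacey--Wick at the bilinear level: first produce a pointwise sparse domination for the commutator, then extract the two-weight bound from that sparse form through a weighted John--Nirenberg substitution, and finally close the argument using the multilinear $A_{\bf p}$ characterization \eqref{eq-m}.

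For the sparse-domination step, I would produce, for each $f_0,f_1\in C^\infty_c(\mathbb{R}^d)$, a sparse collection $\mathcal S$ of dyadic cubes such that
\[
\bigl|[{\bf b},T]_{\mathcal I}(f_0,f_1)(x)\bigr|\lesssim \sum_{Q\in \mathcal S}\prod_{i\in \mathcal I}|b_i(x)-\laz b_i\raz_Q|\cdot \laz |f_0|\raz_Q\laz |f_1|\raz_Q\chi_Q(x)+(\text{symmetric tails}),
\]
with tails that move some of the oscillation factors from $x$ into the averages. This is obtained by plugging the standard Lerner-type sparse domination for bilinear Calder\'{o}n--Zygmund operators into the commutator decomposition \eqref{eq-li} once per index $i\in \mathcal I$, and reduces the theorem to a two-weight bound for each such commutator sparse form.

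For the two-weight sparse bound, I would dualize: pairing with $g\in L^{p'}$ in the appropriate dual weighted space produces sums of the type
\[
\sum_{Q\in \mathcal S}|Q|\prod_{i\in \mathcal I}\laz |b_i-\laz b_i\raz_Q|\,|g|\raz_Q\cdot \laz |f_0|\raz_Q\laz |f_1|\raz_Q.
\]
The crucial input is the weighted John--Nirenberg substitution $\laz |b_i-\laz b_i\raz_Q|\,|g|\raz_Q\lesssim \|b_i\|_{{\rm BMO}_{v_i}(\mathbb{R}^d)}\laz v_i\raz_Q \laz |g|^{r_i}\raz_Q^{1/r_i}$ for some $r_i>1$ close to $1$, which converts the ${\rm BMO}_{v_i}$-oscillation into the Bloom weight $v_i$. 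The remaining sum is then controlled by an iterated Carleson embedding argument using \eqref{eq-m}, together with the reverse H\"older property of $A_{\bf p}$ weights (used to absorb the exponent $r_i$). For the iterated case $\mathcal I=\{0,1\}$ the John--Nirenberg substitution is applied once per index, yielding the product $\prod_{i\in \mathcal I}\|b_i\|_{{\rm BMO}_{v_i}(\mathbb{R}^d)}$.

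The main obstacle I anticipate is the bookkeeping around the asymmetric weight vector $(w_0,w_1)$: recall that $w_i=\sigma_i$ is forced when $i\notin \mathcal I$, while one may choose $w_i$ to be $\sigma_i$ or $\lambda_i$ when $i\in \mathcal I$. Verifying that every such admissible choice fits the same Carleson framework, and that the sparse bound recovers the exact product of Bloom BMO norms rather than a larger quantity, is where the characterization \eqref{eq-m} has to be used carefully in order to realize the relevant multilinear weights as ordinary Muckenhoupt weights on which the standard two-weight sparse theory applies.
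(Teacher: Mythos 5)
The first thing to say is that the paper does not prove this statement at all: Theorem \ref{t-li} is imported verbatim as a bilinear instance of \cite[Theorem 3.5]{L} and is used as a black box in the proof of Corollary \ref{c-twb}. So there is no in-paper argument to compare yours against; the only meaningful comparison is with the proof in \cite{L} itself.

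Measured against that, your outline points in the right general direction --- sparse domination of the iterated commutator, dualization, a weighted John--Nirenberg substitution converting the oscillation of $b_i$ into the Bloom weight $v_i$, and a Carleson-type embedding --- which is indeed the Lerner--Ombrosi--Rivera-R\'{\i}os template that \cite{L} adapts. But as written it is a plan rather than a proof, and the two steps it leaves as claims are precisely the hard ones. First, for $\mathcal{I}=\{0,1\}$ the sparse form contains joint oscillations such as $\langle |b_0-\langle b_0\rangle_Q|\,|b_1-\langle b_1\rangle_Q|\,|g|\rangle_Q$, not the product of separate averages you wrote down, so the John--Nirenberg substitution cannot simply be ``applied once per index''; one needs a H\"{o}lder splitting of the joint oscillation together with the hypothesis $v_i\in A_\infty$, which is an extra assumption exactly because the multilinear condition $(w_0,w_1)\in A_{\bf p}$ only yields $v_i^{1/2}\in A_2$ (as the paper remarks right after the theorem). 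Second, and more importantly, your closing appeal to ``the standard two-weight sparse theory'' for ordinary Muckenhoupt weights does not apply here: in the genuinely multilinear setting of \eqref{eq-m} the individual weights $w_i$ need not lie in $A_{p_i}$, only $w_{\bf p}\in A_{2p}$ and $w_i^{1-p_i'}\in A_{2p_i'}$ hold, and handling the Carleson embedding under exactly these conditions --- for every admissible mixed choice of $\sigma_i$ versus $\lambda_i$ --- is the actual content of \cite{L}. Until those two steps are carried out, the proposal establishes the reduction but not the theorem.
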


We would like to mention that different from the linear case, in the multilinear case, by \eqref{eq-m}, we only have $\sigma^{1-p'_i}_i,\lambda^{1-p'_i}_i\in A_{2p'_i}$ for each $i\in \mathcal{I}$, which implies $v^{\frac{1}{2}}_i\in A_{2}$, but may not ensure $v_i\in A_{\infty}$, see \cite[Example 2.12]{L} for more details.

For the application of Theorem \ref{t-cfm}, we need the following unweighted compactness result of B\'{e}nyi and Torres \cite[Theorem 1]{BT}, see also \cite{TXYY} for some developments in the one-weight setting.
\begin{thm}[\cite{BT}]\label{t-bt}
Let $T$ be a bilinear Calder\'{o}n--Zygmund operator and $\mathcal{I}(\not= \emptyset)\subset \{0,1\}$. Let $[{\bf b}, T]_{\mathcal{I}}$ be defined as in \eqref{eq-li}, $p_0,p_1,p\in (1,\infty)$ such that $\frac{1}{p}=\frac{1}{p_0}+\frac{1}{p_1}$. If $b_i \in {\rm CMO}(\mathbb{R}^d)$ for each $i\in \mathcal{I}$, then $[{\bf b}, T]_{\mathcal{I}}$ is compact from $L^{p_0}(\mathbb{R}^d)\times L^{p_1}(\mathbb{R}^d)$ to $L^{p}(\mathbb{R}^d)$.
\end{thm}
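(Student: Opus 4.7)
The plan is to follow the two-stage strategy that Uchiyama used for linear commutators and that B\'{e}nyi--Torres adapted to the bilinear setting: reduce from ${\rm CMO}$ symbols to smooth compactly supported symbols by a continuity-in-${\bf b}$ argument, and then verify compactness for the smooth case via a bilinear Fr\'{e}chet--Kolmogorov criterion.

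First I would observe that compact bilinear operators form a closed subspace of the space of bounded bilinear operators $L^{p_0}\times L^{p_1}\to L^p$ with respect to the operator norm, so it suffices to approximate ${\bf b}$ by a sequence ${\bf b}^{(n)}$ for which $[{\bf b}^{(n)}, T]_\mathcal{I}$ is known to be compact. Since $C_c^\infty(\mathbb{R}^d)$ is dense in ${\rm CMO}(\mathbb{R}^d)$ by definition, and since Theorem \ref{t-li} specialized to $\sigma_i=\lambda_i\equiv 1$ (so that $v_i\equiv 1$ and ${\rm BMO}_{v_i}(\mathbb{R}^d)={\rm BMO}(\mathbb{R}^d)$) gives
\begin{equation*}
\|[{\bf b},T]_\mathcal{I}-[{\bf b}',T]_\mathcal{I}\|_{L^{p_0}\times L^{p_1}\to L^p}\ls \max_{i\in\mathcal{I}}\|b_i-b_i'\|_{{\rm BMO}(\mathbb{R}^d)}\prod_{j\in\mathcal{I}\setminus\{i\}}(\|b_j\|_{{\rm BMO}(\mathbb{R}^d)}+\|b_j'\|_{{\rm BMO}(\mathbb{R}^d)}),
\end{equation*}
the desired compactness reduces to the case $b_i\in C_c^\infty(\mathbb{R}^d)$ for every $i\in\mathcal{I}$.

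Second, for $b_i\in C_c^\infty(\mathbb{R}^d)$ supported in a fixed ball $B_0$, I would verify the bilinear Fr\'{e}chet--Kolmogorov conditions on the image of the product-unit-ball of $L^{p_0}\times L^{p_1}$: uniform $L^p$-boundedness (immediate from the commutator bound above), $L^p$-equicontinuity under translation uniformly in $\|f_i\|_{L^{p_i}}\le 1$, and uniform $L^p$-tightness at infinity. Equicontinuity would come from the standard H\"{o}lder estimate on the kernel $K$ together with the Lipschitz bound on the $b_i$, organized as a near/far decomposition in each integration variable and reduced in the end to bounds by a bilinear maximal-type operator acting on $(f_0,f_1)$. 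The tightness exploits the compact support of the $b_i$: for $|x|$ much larger than the diameter of $B_0$, the terms $b_i(x)T(f_0,f_1)(x)$ vanish, while the remaining terms $T((b_i-b_i(x))f_i,\cdot)(x)$ acquire polynomial decay from the kernel bound $|K(x,y_0,y_1)|\ls(|x-y_0|+|x-y_1|)^{-2d}$, since the factor $b_i(y_i)$ confines $y_i$ to $B_0$.

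The main obstacle will be the iterated case $\mathcal{I}=\{0,1\}$, where the Schwartz kernel of $[{\bf b},T]_{\{0,1\}}$ effectively carries the product $(b_0(x)-b_0(y_0))(b_1(x)-b_1(y_1))K(x,y_0,y_1)$. To translate a shift $x\mapsto x+h$ into a quantitative $L^p$-smallness estimate, one must split \emph{both} $y_0$- and $y_1$-integrals into near and far regions relative to $x$ and $|h|$, producing four pieces: in the near/near piece one uses joint Lipschitz bounds on both $b_i$; in the two mixed pieces one uses Lipschitz in one variable and kernel H\"{o}lder continuity in the other; and in the far/far piece one uses pure kernel H\"{o}lder continuity with the $b_i$ treated as bounded factors. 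Once this book-keeping is in place, each piece is dominated by bilinear maximal-type quantities already known to be bounded $L^{p_0}\times L^{p_1}\to L^p$, and the outer $L^p$ norm tends to zero with $|h|$. An essentially identical near/far decomposition yields the tightness bound, completing the Fr\'{e}chet--Kolmogorov verification.
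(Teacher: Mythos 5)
The paper offers no proof of this statement: it is imported verbatim from B\'{e}nyi--Torres \cite{BT} and used as a black box in the proof of Corollary \ref{c-twb}. Your plan --- exploit the operator-norm closedness of compact bilinear operators and the bound $\|[{\bf b},T]_{\mathcal{I}}-[{\bf b}',T]_{\mathcal{I}}\|\lesssim\max_i\|b_i-b_i'\|_{{\rm BMO}}\prod_{j\neq i}(\cdots)$ to reduce to $b_i\in C^{\infty}_c(\mathbb{R}^d)$, then verify a bilinear Fr\'{e}chet--Kolmogorov criterion (uniform bound, translation equicontinuity via near/far kernel decompositions, tightness from the compact support of the $b_i$) --- is precisely the argument of \cite{BT}, correctly extended to the iterated case $\mathcal{I}=\{0,1\}$, so it is sound in outline and consistent with the source the paper relies on.
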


A combination of Theorems \ref{t-li} and \ref{t-bt}, together with Theorems \ref{t-cfm} and \ref{t-p}, gives the following two-weight compactness of bilinear commutators of bilinear Calder\'{o}n--Zygmund operators which is a new result to our best knowledge.
\begin{cor}\label{c-twb}
Let $T$ be a bilinear Calder\'{o}n--Zygmund operator and $\mathcal{I}(\not= \emptyset)\subset \{0,1\}$. Let $[{\bf b}, T]_{\mathcal{I}}$ be defined as in \eqref{eq-li}, $p_0,p_1,p\in (1,\infty)$ such that $\frac{1}{p}=\frac{1}{p_0}+\frac{1}{p_1}$. Suppose that $(w_0,w_1)\in A_{\bf p}$, where
$$w_i=\begin{cases}
\sigma_i,& i\notin \mathcal{I};\\
\sigma_i\ {\rm or}\ \lambda_i,& i\in \mathcal{I}.
\end{cases}$$
If $v_i:=\sigma^{\frac{1}{p_i}}_i\lambda^{-\frac{1}{p_i}}_i\in A_{\infty}$ and $$b_i\in \overline{\bigcup_{\eta_{\mathcal{I}}\in (1,r_{\sigma_{\mathcal{I}},\lambda_{\mathcal{I}}})}{\rm BMO}_{v_i^{\eta_{\mathcal{I}}}}(\mathbb{R}^{d})\cap {\rm CMO}(\mathbb{R}^{d})}^{{\rm BMO}_{v_i}(\mathbb{R}^{d})}$$ for each $i\in \mathcal{I}$, where $$r_{\sigma_{\mathcal{I}},\lambda_{\mathcal{I}}}:=\min\bigg(\tilde{r}_{2p,\sigma^{\frac{p}{p_0}}_0\sigma^{\frac{p}{p_1}}_1},\tilde{r}_{2p,\prod_{i\in \mathcal{I}}\lambda_i^{\frac{p}{p_i}} \prod_{j\notin \mathcal{I}}\sigma_j^{\frac{p}{p_j}}}, \tilde{r}_{2p'_0,\sigma_0^{1-p'_0}}, \tilde{r}_{2p'_1,\sigma_1^{1-p'_1}}, \min_{i\in \mathcal{I}}\tilde{r}_{2p'_i,\lambda_i^{1-p'_i}}, \min_{i\in\mathcal{I}}r_{v_i}\bigg).$$ Then $[{\bf b}, T]_{\mathcal{I}}$ is compact from $L^{p_0}(\sigma_0)\times L^{p_1}(\sigma_1)$ to $L^p(\prod_{i\in \mathcal{I}}\lambda^{\frac{p}{p_i}}_i \prod_{j\notin \mathcal{I}}\sigma^{\frac{p}{p_j}}_j)$.
\end{cor}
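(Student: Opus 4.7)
The plan is to mirror the structure of the proofs of Corollaries~\ref{c-ll} and~\ref{c-new}, replacing the linear interpolation Theorem~\ref{t-ck} with its bilinear version Theorem~\ref{t-cfm}. First, Theorem~\ref{t-li} supplies the linearity-in-$\mathbf{b}$ estimate
\begin{equation*}
\|[{\bf b}, T]_{\mathcal{I}} - [{\bf b}', T]_{\mathcal{I}}\|_{L^{p_0}(\sigma_0)\times L^{p_1}(\sigma_1)\to L^p(\prod_{i\in\mathcal{I}}\lambda^{p/p_i}_i\prod_{j\notin\mathcal{I}}\sigma^{p/p_j}_j)} \lesssim \prod_{i\in\mathcal{I}} \|b_i - b'_i\|_{{\rm BMO}_{v_i}(\mathbb{R}^d)},
\end{equation*}
so by approximation in the ${\rm BMO}_{v_i}$ norm (and the fact that compact operators form a closed subspace of the bounded bilinear ones) it suffices to prove compactness in the case $b_i \in {\rm BMO}_{v_i^{\eta}}(\mathbb{R}^d) \cap {\rm CMO}(\mathbb{R}^d)$ for all $i\in\mathcal{I}$ and some fixed $\eta \in (1, r_{\sigma_{\mathcal{I}},\lambda_{\mathcal{I}}})$.

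Next, I would set up the two endpoints required by Theorem~\ref{t-cfm}. For the unweighted compact endpoint, Theorem~\ref{t-bt} applied with each $b_i \in {\rm CMO}(\mathbb{R}^d)$ gives compactness of $[{\bf b}, T]_{\mathcal{I}}$ from $L^{p_0}(\mathbb{R}^d) \times L^{p_1}(\mathbb{R}^d)$ to $L^{p}(\mathbb{R}^d)$. For the weighted bounded endpoint, using the characterization~\eqref{eq-m} together with the sharp reverse H\"{o}lder inequality of Theorem~\ref{t-p}, I would verify that for every allowed choice of $w_i \in \{\sigma_i, \lambda_i\}$ at positions $i\in\mathcal{I}$ the vector $(w_0^{\eta}, w_1^{\eta})$ still belongs to $A_{\bf p}$; this is the content of each factor in the definition of $r_{\sigma_{\mathcal{I}},\lambda_{\mathcal{I}}}$. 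Combined with $v_i^{\eta} \in A_{\infty}$ (which holds since $\eta < r_{v_i}$, as in the remark following Lemma~\ref{l-lor}) and with $b_i \in {\rm BMO}_{v_i^{\eta}}(\mathbb{R}^d)$, a second application of Theorem~\ref{t-li} now with $\sigma_i^{\eta}, \lambda_i^{\eta}$ in place of $\sigma_i, \lambda_i$ produces the boundedness of $[{\bf b}, T]_{\mathcal{I}}$ from $L^{p_0}(\sigma_0^{\eta}) \times L^{p_1}(\sigma_1^{\eta})$ to $L^{p}(\prod_{i\in\mathcal{I}}\lambda_i^{\eta p/p_i} \prod_{j\notin\mathcal{I}}\sigma_j^{\eta p/p_j})$.

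Finally, I would invoke Theorem~\ref{t-cfm} with $\theta = 1 - 1/\eta$, choosing
\begin{equation*}
X_0 = L^{p_0}(\sigma_0^{\eta}),\ X_1 = L^{p_0}(\mathbb{R}^d),\ Y_0 = L^{p_1}(\sigma_1^{\eta}),\ Y_1 = L^{p_1}(\mathbb{R}^d),
\end{equation*}
together with $Z_0 = L^{p}(\prod_{i\in\mathcal{I}}\lambda_i^{\eta p/p_i} \prod_{j\notin\mathcal{I}}\sigma_j^{\eta p/p_j})$ and $Z_1 = L^{p}(\mathbb{R}^d)$. A direct application of the Stein--Weiss Theorem~\ref{t-sw} (with $t=q=p_i$, respectively $t=q=p$) identifies $[X_0, X_1]_{\theta} = L^{p_0}(\sigma_0)$, $[Y_0, Y_1]_{\theta} = L^{p_1}(\sigma_1)$, and $[Z_0, Z_1]_{\theta} = L^{p}(\prod_{i\in\mathcal{I}}\lambda_i^{p/p_i} \prod_{j\notin\mathcal{I}}\sigma_j^{p/p_j})$, which is precisely the target. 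The absolute continuity of the norm of $Z_0$ and the Banach-function-space hypothesis of Theorem~\ref{t-cfm} are standard for weighted Lebesgue spaces (see the comments following Theorem~\ref{t-cfm}). The main obstacle in executing this plan is the bookkeeping step in the middle paragraph: arranging a single $\eta > 1$ for which the reverse H\"{o}lder improvement holds simultaneously for every Muckenhoupt weight that appears across all allowed choices of $(w_0, w_1)$, which is what the explicit definition of $r_{\sigma_{\mathcal{I}},\lambda_{\mathcal{I}}}$ encodes.
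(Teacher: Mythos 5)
Your overall strategy coincides with the paper's proof: reduce to $b_i$ in the dense class ${\rm BMO}_{v_i^{\eta}}\cap{\rm CMO}$, get the compact unweighted endpoint from Theorem~\ref{t-bt}, get the bounded weighted endpoint from Theorem~\ref{t-li} after upgrading all the relevant Muckenhoupt weights by the power $\eta$ via \eqref{eq-m} and Theorem~\ref{t-p}, and conclude with Theorem~\ref{t-cfm} plus the Stein--Weiss identification. However, two steps as you wrote them do not go through. First, the displayed approximation estimate
\begin{equation*}
\|[{\bf b},T]_{\mathcal{I}}-[{\bf b}',T]_{\mathcal{I}}\|\lesssim \prod_{i\in\mathcal{I}}\|b_i-b_i'\|_{{\rm BMO}_{v_i}(\mathbb{R}^d)}
\end{equation*}
is false when $\mathcal{I}=\{0,1\}$: the iterated commutator is bilinear, not linear, in ${\bf b}$, so $[{\bf b},T]_{\mathcal{I}}-[{\bf b}',T]_{\mathcal{I}}\neq[{\bf b}-{\bf b}',T]_{\mathcal{I}}$ (take $b_1=b_1'$, $b_0\neq b_0'$: your right-hand side vanishes while the left-hand side is $\|[{\bf b},[{\bf b}-{\bf b}',T]_{\{0\}}]_{\{1\}}\|$, generically nonzero). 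The reduction to the dense class still works, but it requires the telescoping decomposition the paper uses, which yields the bound $\|b_0\|_{{\rm BMO}_{v_0}}\|b_1-b_1^*\|_{{\rm BMO}_{v_1}}+\|b_1^*\|_{{\rm BMO}_{v_1}}\|b_0-b_0^*\|_{{\rm BMO}_{v_0}}$; this suffices for convergence of the approximating sequence but is not the product bound you stated.

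Second, your assignment of endpoints in Theorem~\ref{t-cfm} violates its hypothesis. That theorem asks for compactness of $T:X_0^{\circ}\times Y_0^{\circ}\to Z_0$ (and absolute continuity of the norm of $Z_0$), i.e.\ the compact endpoint must carry the index $0$. You placed the unweighted spaces --- the only ones where compactness is known, via Theorem~\ref{t-bt} --- at index $1$, so as written the theorem does not apply. The fix is the relabeling the paper uses: take $X_0=L^{p_0}(\mathbb{R}^d)$, $X_1=L^{p_0}(\sigma_0^{\eta})$ (similarly for $Y$ and $Z$) and $\theta=1/\eta$ instead of $1-1/\eta$; the Stein--Weiss identification of the interpolation spaces then gives the same target spaces. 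With these two corrections your argument matches the paper's proof.
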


\begin{proof}
We only give the detailed proof of the corollary in the case $\mathcal{I}=\{0,1\}$, since the proofs of the other two cases are similar and easier. First, by Theorem \ref{t-li}, we have that
\begin{align*}
&\|[{\bf b},T]_{\mathcal{I}}-[{\bf b^*},T]_{\mathcal{I}}\|\\
&=\|[{\bf b},[{\bf b},T]_{\{0\}}]_{\{1\}}-[{\bf b^*},[{\bf b},T]_{\{0\}}]_{\{1\}}+[{\bf b^*},[{\bf b},T]_{\{0\}}]_{\{1\}}-[{\bf b^*},[{\bf b^*},T]_{\{0\}}]_{\{1\}}\|\\
&\leq \|[{\bf b-b^*},[{\bf b},T]_{\{0\}}]_{\{1\}}\| + \|[{\bf b^*},[{\bf b-b^*},T]_{\{0\}}]_{\{1\}}\|\\
&\lesssim \|b_0\|_{{\rm BMO}_{v_0}(\mathbb{R}^d)} \|b_1-b^*_1\|_{{\rm BMO}_{v_1}(\mathbb{R}^d)} + \|b^*_1\|_{{\rm BMO}_{v_1}(\mathbb{R}^d)} \|b_0-b^*_0\|_{{\rm BMO}_{v_0}(\mathbb{R}^d)},
\end{align*}
provided that ${\bf b}=(b_0,b_1), {\bf b^*}=(b^*_0,b^*_1)\in {\rm BMO}_{v_0}(\mathbb{R}^d)\times  {\rm BMO}_{v_1}(\mathbb{R}^d)$, where $$\|\cdot\|:=\|\cdot\|_{L^{p_0}(\sigma_0)\times L^{p_1}(\sigma_1)\to L^p(\lambda^{\frac{p}{p_0}}_0\lambda^{\frac{p}{p_1}}_1)}.$$
Therefore, it suffices to show the two-weight compactness of the bilinear commutator $[{\bf b},T]_{\mathcal{I}}$ with $b_i\in {\rm BMO}_{v_i^{\eta_{\mathcal{I}}}}(\mathbb{R}^{d})\cap {\rm CMO}(\mathbb{R}^{d})$ for each $i\in \mathcal{I}$, where $\eta_{\mathcal{I}}\in (1,r_{\sigma_{\mathcal{I}},\lambda_{\mathcal{I}}})$.

From $b_i\in {\rm CMO}(\mathbb{R}^{d})$ for each $i\in \mathcal{I}$ and Theorem \ref{t-bt}, we deduce that $[{\bf b}, T]_{\mathcal{I}}$ is compact from $L^{p_0}(\mathbb{R}^d)\times L^{p_1}(\mathbb{R}^d)$ to $L^{p}(\mathbb{R}^d)$. Next, for $(\sigma_0,\sigma_1),(\lambda_0,\lambda_1)\in A_{\bf p}$, from \eqref{eq-m},
$$\sigma^{\frac{p}{p_0}}_0\sigma^{\frac{p}{p_1}}_1, \lambda^{\frac{p}{p_0}}_0\lambda^{\frac{p}{p_1}}_1\in A_{2p},\ \sigma^{1-p'_i}_i,\lambda^{1-p'_i}_i\in A_{2p'_i},\ i=0,1.$$ Then by Theorem \ref{t-p}, when $\eta_{\mathcal{I}}\in (1,r_{\sigma_{\mathcal{I}},\lambda_{\mathcal{I}}})$,  we see that
$$\sigma^{\frac{\eta_{\mathcal{I}} p}{p_0}}_0\sigma^{\frac{\eta_{\mathcal{I}} p}{p_1}}_1, \lambda^{\frac{\eta_{\mathcal{I}} p}{p_0}}_0\lambda^{\frac{\eta_{\mathcal{I}} p}{p_1}}_1\in A_{2p},\ \sigma^{\eta_{\mathcal{I}}(1-p'_i)}_i,\lambda^{\eta_{\mathcal{I}}(1-p'_i)}_i\in A_{2p'_i},\ v^{\eta_{\mathcal{I}}}_i\in A_{\infty},\ i=0,1.$$ From this, again by \eqref{eq-m}, we have $(\sigma^{\eta_{\mathcal{I}}}_0,\sigma^{\eta_{\mathcal{I}}}_1),(\lambda^{\eta_{\mathcal{I}}}_0,\lambda^{\eta_{\mathcal{I}}}_1)\in A_{\bf p}$. Note that from $b_i\in {\rm BMO}_{v^{\eta_{\mathcal{I}}}_i}(\mathbb{R}^d)$ for each $i\in \mathcal{I}$ and Theorem \ref{t-li}, we deduce that $[{\bf b},T]_{\mathcal{I}}$ is bounded from $L^{p_0}(\sigma^{\eta_{\mathcal{I}}}_0)\times L^{p_1}(\sigma^{\eta_{\mathcal{I}}}_1)$ to $L^p(\lambda^{\frac{\eta_{\mathcal{I}} p}{p_0}}_0\lambda^{\frac{\eta_{\mathcal{I}} p}{p_1}}_1)$. Thus, by Theorem \ref{t-cfm} with
\begin{align*}
(X_0,X_1)&=(L^{p_0}(\mathbb{R}^d),L^{p_0}(\sigma^{\eta_{\mathcal{I}}}_0)),\ \ (Y_0,Y_1)=(L^{p_1}(\mathbb{R}^d),L^{p_1}(\sigma^{\eta_{\mathcal{I}}}_1)),\\
(Z_0,Z_1)&=(L^{p}(\mathbb{R}^d),L^p(\lambda^{\frac{\eta_{\mathcal{I}} p}{p_0}}_0\lambda^{\frac{\eta_{\mathcal{I}} p}{p_1}}_1)),\ \ \theta=\frac{1}{\eta_{\mathcal{I}}},
\end{align*}
we obtain that $[{\bf b},T]_{\mathcal{I}}$ is compact from $L^{p_0}(\sigma_0)\times L^{p_1}(\sigma_1)$ to $L^p(\lambda^{\frac{p}{p_0}}_0\lambda^{\frac{p}{p_1}}_1)$. This completes the proof.
\end{proof}

\begin{remark}
From our applications in Sections \ref{s-4} and \ref{s-5}, we see that the two-weight compactness on weighted Lebesgue spaces of certain (multi-)linear operators is naturally obtained from the unweighted compactness and the two-weight boundedness without dealing with any concrete forms and characteristics of these operators. Based on this fact, our results may still valid for more general (multi-)linear integral operators, e.g., $\omega$-Calder\'{o}n--Zygmund operators with $\omega$ satisfying the Dini type estimates studied in \cite{LOR}.
\end{remark}

\bigskip
\noindent {\bf Acknowledgments:} Wu is supported by the NNSF of China (Grant Nos. 11871101 and 11771358). Yang is supported by the NNSF of China (Grant Nos. 11971402 and 11871254).

\end{document}